\newcommand{\stkout}[1]{\ifmmode\text{\sout{\ensuremath{#1}}}\else\sout{#1}\fi}
\def\namedlabel#1#2{\begingroup
    #2%
    \def\@currentlabel{#2}%
    \phantomsection\label{#1}\endgroup}
\newlist{describe}{description}{1}
\setlist[describe,1]{%
  font=\normalfont\textbf,
  itemindent=0pt,
  wide,
  itemsep=0pt,topsep=2pt}
\crefname{assumption}{Assumption}{Assumptions}
\newcommand{\df}{\coloneqq}
\DeclareMathOperator{\Exp}{\mathbb{E}} 
\DeclareMathOperator{\Prob}{\mathbb{P}} 
\newcommand{\D}{\mathrm{d}}          
\newcommand{\E}{\mathrm{e}}          
\newcommand{\RR}{\mathbb{R}}         
\newcommand{\Rd}{{\mathbb{R}^d}}       
\newcommand{\NN}{\mathbb{N}}         
\newcommand{\Ind}{\mathds{1}}            
\newcommand{\Act}{\mathbb{U}}        
\newcommand{\Uadm}{\mathfrak{U}}     
\newcommand{\Sob}{\mathscr{W}}       
\newcommand{\Sobl}{\mathscr{W}_{\mathrm{loc}}}  
\newcommand{\Cc}{C}                  
\newcommand{\lamstr}{\lambda^{\!*}}  
\newcommand{\ulamstr}{\underline{\lambda}^{\!*}} 
\newcommand{\olamstr}{\Bar{\lambda}^{\!*}} 
\newcommand{\transp}{^{\mathsf{T}}}  
\newcommand{\order}{{\mathscr{O}}}   
\newcommand{\sorder}{{\mathfrak{o}}} 
\newcommand{\Lg}{\mathscr{L}}        
\newcommand{\cL}{\mathcal{L}}        
\newcommand{\uuptau}{\Breve{\uptau}}
\newcommand{\grad}{\nabla}
\newcommand{\cA}{\mathcal{A}}
\newcommand{\sE}{\mathscr{E}}     
\newcommand{\sF}{\mathfrak{F}}    
\newcommand{\cG}{\mathcal{G}}    
\newcommand{\cH}{\mathcal{H}}
\newcommand{\cK}{\mathcal{K}}
\newcommand{\cP}{\mathcal{P}}     
\newcommand{\Lyap}{\mathscr{V}}   
\newcommand{\abs}[1]{\lvert#1\rvert}
\newcommand{\norm}[1]{\lVert#1\rVert}
\newcommand{\TheTitle}{A variational formula for risk-sensitive control}
\newcommand{\TheAuthors}{Ari Arapostathis and Anup Biswas}
\headers{\TheTitle}{\TheAuthors}
\title{{A variational formula for risk-sensitive\\ control of diffusions in $\Rd$}}
\author{Ari Arapostathis\thanks{Department of Electrical and Computer
Engineering, The University of Texas at Austin, 2501 Speedway, EER 7.824,
Austin, TX 78712 (\email{ari@ece.utexas.edu}).}
\and
Anup Biswas\thanks{Department of Mathematics,
Indian Institute of Science Education and Research,
Dr.\ Homi Bhabha Road, Pune 411008, India
(\email{anup@iiserpune.ac.in}).}}
\begin{document}
\maketitle

\begin{abstract}
We address the variational problem for the generalized principal
eigenvalue on $\Rd$ of linear and semi-linear  elliptic
operators associated with nondegenerate diffusions controlled through the drift.
We establish the Collatz--Wielandt formula for potentials
that vanish at infinity under minimal hypotheses, and also for general potentials
under blanket geometric ergodicity assumptions.
We also present associated results having the flavor of a
refined maximum principle.
\end{abstract}

\begin{keywords}
Principal eigenvalue; (Agmon) ground state; semilinear PDE;
Donsker--Varadhan functional; Collatz--Wielandt formula; min-max formula
\end{keywords}

\begin{AMS}
60J60, Secondary 60J25, 35P15, 60F10, 49G05
\end{AMS}

\section{Introduction}
Since the seminal work of Donsker and Varadhan \cite{DoVa-72,DoVa-76b},
a lot of effort has been devoted to variational characterizations
of principal eigenvalues of elliptic operators.
More recently, the work of
Berestycki, Nirenberg, and Varadhan \cite{Berestycki-94} opened
up the study of generalized eigenvalues in unbounded domains
(see also \cite{Berestycki-15}),
while advances in nonlinear Perron--Frobenius theory \cite{Lemmens-12,NP}
made possible the extension of the classical Collatz--Wielandt formula
for the Perron--Frobenius eigenvalue of irreducible non-negative matrices
to more abstract settings.
See also \cite[Chapter~3]{Pinsky} for
a Collatz--Wielandt formula
for symmetric second-order operators in bounded domains.

The motivation for this work is
the infinite horizon risk-sensitive control problem on the entire domain, which
seeks to minimize the asymptotic growth rate of the expected
`exponential of integral' cost, and which, under suitable assumptions,
coincides with the generalized principal eigenvalue of
the associated semilinear elliptic operator (for some recent
results see \cite{ari-anup,ABS}).
Recall the celebrated formula of Donsker--Varadhan:
for a uniformly elliptic nondivergence form operator $\Lg$ on a smooth bounded
domain $D\subset\Rd$, the principal eigenvalue $\lambda_1(\Lg,D)$
can be expressed as
\begin{equation*}
\lambda_1(\Lg,D)\,=\, \sup_{\varphi\in\Cc^{2,+}(D)}\,\inf_{\mu\in\cP(\Bar{D})}\,
\int_{D}\frac{\Lg \varphi(x)}{\varphi(x)}\,\mu(\D{x})\,,
\end{equation*}
where $\cP(\Bar{D})$ denotes the set of Borel probability measures
on $\Bar{D}$, and $\Cc^{2,+}(D)$ the space of positive functions in
$\Cc^2(D)\cap\Cc(\Bar{D})$.
Taking the supremum over measures,  followed by
the infimum over the function space, also results in equality,
and this forms an extension of the classical Collatz--Wielandt formula.
For versions of this formula for nonlinear operators on a bounded
domain see \cite{AS09,Quaas-08a}.

The Collatz--Wielandt formula for a reflected controlled diffusion
on a bounded domain has been studied in \cite{ABK16}
with the aid of nonlinear versions of the Krein--Rutman theorem.
Establishing this min-max formula over $\Rd$ is quite challenging, not only due
to the lack of compactness, but also because the generalized principal eigenvalue
of an operator does not enjoy all the structural properties of eigenvalues
over bounded domains. We take a different approach
which is based on the stochastic representation of
principal eigenfunctions to obtain several variational formulations of the
principal eigenvalues. 
For potentials that vanish at infinity, we exhibit the Collatz--Wielandt
formula under minimal assumptions (see \cref{T2.2}).
For more general potentials, we impose blanket geometric ergodicity assumptions
to handle the lack of compactness (see \cref{A2.1,A2.2}),
and establish the formula in \cref{T2.4}.
We then continue with two results in the flavor of a refined maximum
principle (see \cref{T2.5,T2.6}), and conclude the study
with some characterizations of the generalized principal eigenvalue
(\cref{T2.7,T2.8,T2.19}).
The proofs of these results are in \cref{S3}.

We would also like to mention
the recent work in \cite{AABK} which studies the  maximization
of the risk-sensitive average reward on the whole space,
without employing any blanket ergodicity assumptions.
The approach in \cite{AABK} leads to a concave maximization problem, which is
quite different from the `sup--inf' and `inf--sup' formulas in the current paper.

\section{Assumptions and main results}
\subsection{The controlled diffusion model}

Consider a controlled diffusion process $X = \{X_{t},\,t\ge0\}$
which takes values in the $d$-dimensional Euclidean space $\RR^{d}$, and
is governed by the It\^o  equation
\begin{equation}\label{E-sde}
\D{X}_{t} \;=\,b(X_{t},U_{t})\,\D{t} + \upsigma(X_{t})\,\D{W}_{t}\,.
\end{equation}
All random processes in \cref{E-sde} live in a complete
probability space $(\Omega,\sF,\Prob)$.
The process $W$ is a $d$-dimensional standard Wiener process independent
of the initial condition $X_{0}$.
The control process $U$ takes values in a compact, metrizable set $\Act$, and
$U_{t}(\omega)$ is jointly measurable in
$(t,\omega)\in[0,\infty)\times\Omega$.
The set $\Uadm$ of \emph{admissible controls} consists of the
control processes $U$ that are \emph{non-anticipative}:
for $s < t$, $W_{t} - W_{s}$ is independent of
\begin{equation*}
\sF_{s} \,\df\,\text{the completion of~}
\cap_{y>s}\sigma\{X_{0},U_{r},W_{r},\;r\le y\}
\text{~relative to~}(\sF,\Prob)\,.
\end{equation*}

We impose the following standard assumptions on the drift $b$
and the diffusion matrix $\upsigma$ to guarantee existence
and uniqueness of solutions.
\begin{itemize}
\item[(A1)]
\emph{Local Lipschitz continuity:\/}
The functions
$b\colon\RR^{d}\times\Act\to\RR^{d}$ and 
$\upsigma\colon\RR^{d}\to\RR^{d\times d}$
are continuous, and satisfy
\begin{equation*}
\abs{b(x,u)-b(y, u)} + \norm{\upsigma(x) - \upsigma(y)}
\,\le\,C_{R}\,\abs{x-y}\qquad\forall\,x,y\in B_R\,,\ \forall\, u\in\Act\, .
\end{equation*}
for some constant $C_{R}>0$ depending on $R>0$.

\item[(A2)]
\emph{Affine growth condition:\/}
For some $C_0>0$, we have
\begin{equation*}
\sup_{u\in\Act}\; \langle b(x,u),x\rangle^{+} + \norm{\upsigma(x)}^{2}\,\le\,C_0
\bigl(1 + \abs{x}^{2}\bigr) \qquad \forall\, x\in\RR^{d}\,.
\end{equation*}
\item[(A3)]
\emph{Nondegeneracy:\/} For each $R>0$, it holds that
\begin{equation*}
\sum_{i, j=1}^d a^{ij}(x) \xi_i \xi_j \ge C^{-1}_R \abs{\xi}^2 \quad \forall x\in B_R\,,
\end{equation*}
and for all $\xi=(\xi_1, \ldots, \xi_d)\transp\in\Rd$,
where  $a=\frac{1}{2}\upsigma\upsigma\transp.$
\end{itemize}

It is well known that under (A1)--(A3), for any admissible control
there exists a unique solution of \cref{E-sde}
\cite[Theorem~2.2.4]{book}.
We define the family of operators $\cL_u\colon\Cc^{2}(\RR^{d})\to\Cc(\RR^{d})$,
where $u\in\Act$ plays the role of a parameter, by
\begin{equation*}
\cL_u f(x) \,\df\,  a^{ij}(x)\,\partial_{ij} f(x)
+ b^{i}(x,u)\, \partial_{i} f(x)\,,\quad u\in\Act\,,\ x\in\Rd\,.
\end{equation*}
Here we adopt the notation
$\partial_{i}\df\tfrac{\partial~}{\partial{x}_{i}}$ and
$\partial_{ij}\df\tfrac{\partial^{2}~}{\partial{x}_{i}\partial{x}_{j}}$
for $i,j\in\{1,\dotsc,d\}$, and we
often use the standard summation rule that
repeated subscripts and superscripts are summed from $1$ through $d$.
Let $c(x,u)$ be a function in $\Cc(\Rd\times\Act, \RR)$ that is locally
Lipschitz in $x$ uniformly with respect to $u\in\Act$,
 and is bounded below in $\Rd$.
We consider the following semilinear operator
\begin{equation}\label{E-cG}
\cG f(x) \,\df\, a^{ij}(x)\,\partial_{ij} f(x)
+ \min_{u\in\Act}\, \bigl[b^{i}(x,u)\, \partial_{i} f(x)
+ c(x,u) f(x)\bigr]\,,\quad x\in\Rd\,.
\end{equation}
We remark that
as far as the results of the paper are concerned, local Lipschitz continuity
of $x\mapsto c(x,u)$ may be relaxed to local H\"older continuity.

\subsection{Statements of the main results.}
Let $D$ be a smooth bounded domain. Without any loss of generality we assume 
that $0\in D$.
The principal eigenvalue of $\cG$ with Dirichlet boundary condition
is defined as follows:
\begin{equation}\label{E-lamD}
\begin{aligned}
\lambda_D(\cG) &\,\df\,\inf\,\bigl\{\lambda\,\colon \exists\,
\psi\in\Cc(\bar D)\cap\Cc^2(D),\; \psi>0
\text{\ in\ } D,\, \\
&\mspace{250mu}\text{\ satisfying\ }
\cG\psi -\lambda\psi\le 0 \text{\ in\ } D \bigr\}\,.
\end{aligned}
\end{equation}
It is then known from \cite[Theorem~1.1]{Quaas-08a} that there exists
a unique $\Psi=\Psi_D\in\Cc(\bar D)\cap\Cc^2(D)$ with $\Psi(0)=1$, $\Psi>0$ in $D$,
which satisfies
\begin{equation}\label{E-Psi}
\cG\Psi\,=\, \lambda_D(\cG)\Psi\quad \text{in\ } D\,,
\quad \Psi=0\quad \text{on\ } \partial D\,.
\end{equation}
By $\Cc^{2,+}(D)$ we denote the set of functions in $\Cc^2(D)\cap \Cc(\bar D)$
that are positive in $D$, and $\Cc^{2, +}_0(D)$ denotes the collection of functions
in $\Cc^{2,+}(D)$ that vanish on $\partial D$.
Our first result establishes a Collatz--Wielandt formula for $\lambda_D$.
The representation \cref{ET2.1B} below can also be found in \cite{AS09}, where
it plays a crucial role in obtaining necessary and sufficient conditions
for the solvability of certain Dirichlet problems.

\begin{theorem}\label{T2.1}
Let $D\subset\Rd$ be a smooth bounded domain.
Then
\begin{align}
\lambda_D(\cG) &\,=\, \sup_{\psi\in\Cc^{2,+}_0(D)}\; \inf_{\mu\in\cP(D)}\;
\int_{D}\frac{\cG\psi}{\psi}\, \D{\mu}\label{ET2.1A} \\
&\,=\, \inf_{\psi\in\Cc^{2,+}(D)}\; \sup_{\mu\in\cP(D)}\;
\int_{D}\frac{\cG\psi}{\psi}\, \D{\mu}\,,\label{ET2.1B}
\end{align}
where $\cP(A)$ denotes the set of all Borel probability measures on the set $A$.
\end{theorem}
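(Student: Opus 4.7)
The inf--sup identity \cref{ET2.1B} is essentially a restatement of the definition \cref{E-lamD}. For any $\psi\in\Cc^{2,+}(D)$ the inner supremum equals $\sup_{x\in D}\cG\psi(x)/\psi(x)$; denoting this quantity by $\lambda_\psi$, either $\lambda_\psi=+\infty$ (in which case the inequality $\lambda_D(\cG)\le\lambda_\psi$ is trivial) or $\cG\psi-\lambda_\psi\psi\le0$ in $D$ with $\psi>0$ in $D$, so by \cref{E-lamD} we have $\lambda_D(\cG)\le\lambda_\psi$. Taking the infimum over $\psi$ yields $\lambda_D(\cG)\le\inf_\psi\sup_\mu\int\cG\psi/\psi\,\D\mu$. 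The reverse inequality is obtained by plugging in $\psi=\Psi_D$, since \cref{E-Psi} gives $\cG\Psi_D/\Psi_D\equiv\lambda_D(\cG)$, so the inner supremum equals $\lambda_D(\cG)$.

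For the sup--inf formula \cref{ET2.1A}, one direction is again immediate by choosing $\psi=\Psi_D$, which reduces the inner infimum to $\int_D\lambda_D(\cG)\,\D\mu=\lambda_D(\cG)$. The nontrivial direction is to show that for every $\psi\in\Cc^{2,+}_0(D)$ there exists $\mu\in\cP(D)$ with $\int\cG\psi/\psi\,\D\mu\le\lambda_D(\cG)$. The plan is to linearize $\cG$ at $\Psi_D$: pick a Borel measurable selector $u^*\colon D\to\Act$ attaining the minimum in \cref{E-cG} when evaluated on $\Psi_D$, and set $L\df\cL_{u^*}+c(\cdot,u^*)$. Then $L\Psi_D=\cG\Psi_D=\lambda_D(\cG)\Psi_D$ in $D$ with $\Psi_D=0$ on $\partial D$, so $\lambda_D(\cG)$ is the principal Dirichlet eigenvalue of the linear non-divergence operator $L$, whose coefficients are bounded and measurable on $\bar D$ with $a$ uniformly elliptic. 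By classical theory (Donsker--Varadhan duality, or the Krein--Rutman theorem applied to $L$ and its formal adjoint in $L^p$), the adjoint problem admits a nonnegative, nontrivial eigenfunction $\eta$ with the same eigenvalue $\lambda_D(\cG)$ and the weak adjoint identity $\int_D(L\varphi)\eta\,\D{x}=\lambda_D(\cG)\int_D\varphi\,\eta\,\D{x}$ valid for test functions $\varphi\in W^{2,p}(D)\cap W^{1,p}_0(D)$; the Harnack / strong maximum principle for the adjoint equation yields $\eta>0$ a.e.\ on $D$.

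With $\eta$ in hand, define $\mu(\D{x})\df Z^{-1}\psi(x)\eta(x)\,\D{x}$ with $Z\df\int_D\psi\eta\,\D{x}$, which is a bona fide element of $\cP(D)$ because $\psi,\eta>0$ and $\psi\eta$ is integrable on the bounded domain $D$. Using the pointwise bound
\begin{equation*}
\cG\psi(x)\,\le\,\cL_{u^*}\psi(x)+c(x,u^*(x))\psi(x)\,=\,L\psi(x)\quad\text{in }D\,,
\end{equation*}
which follows because $u^*$ is not necessarily the minimizer of the bracket in \cref{E-cG} when evaluated on $\psi$, and then invoking the weak adjoint identity applied to $\psi$ (which is admissible since $\psi\in\Cc^{2,+}_0(D)$), we obtain
\begin{equation*}
\int_D\frac{\cG\psi}{\psi}\,\D\mu\,=\,\frac{1}{Z}\int_D\cG\psi\cdot\eta\,\D{x}
\,\le\,\frac{1}{Z}\int_D L\psi\cdot\eta\,\D{x}
\,=\,\frac{\lambda_D(\cG)}{Z}\int_D\psi\eta\,\D{x}\,=\,\lambda_D(\cG)\,.
\end{equation*}
Taking the supremum over $\psi\in\Cc^{2,+}_0(D)$ closes the argument.

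The principal obstacle is the construction and sufficient regularity of the adjoint eigenfunction $\eta$ for the linearized operator $L$, whose drift $b(\cdot,u^*(\cdot))$ is only Borel measurable after the selection step, together with a careful justification of the weak adjoint identity on test functions that vanish on $\partial D$ but only lie in $\Cc(\bar D)\cap\Cc^2(D)$ rather than in a globally smooth class; standard non-divergence adjoint theory (together with approximation of $\psi$ by interior-supported functions if needed, using the $L^p$ integrability of $\eta$ near $\partial D$) should handle both points.
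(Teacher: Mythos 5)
Your handling of \cref{ET2.1B}, and the easy half of \cref{ET2.1A} obtained by testing with $\Psi_D$, are correct and coincide with the paper's argument. For the hard half of \cref{ET2.1A} you take a genuinely different route: the paper reduces the inner infimum to the pointwise infimum $\inf_D\cG\psi/\psi$ and rules out $\inf_D\cG\psi/\psi>\lambda_D(\cG)$ by invoking the Quaas--Sirakov simplicity theorem (any positive $\psi\in\Cc(\bar D)\cap\Cc^2(D)$ vanishing on $\partial D$ with $\cG\psi\ge\lambda\psi$, $\lambda>\lambda_D$, must be a multiple of $\Psi_D$), whereas you linearize at $\Psi_D$ through a measurable selector and pair against an adjoint principal eigenfunction $\eta$ of $L=\cL_{u^*}+c(\cdot,u^*(\cdot))$. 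The duality idea is classical and sound in spirit, and the pointwise bound $\cG\psi\le L\psi$ is correct, but the execution has a genuine gap exactly where you flag the ``principal obstacle.''

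The gap is the application of the weak adjoint identity to an arbitrary $\psi\in\Cc^{2,+}_0(D)$. That identity is available for test functions in $\Sob^{2,p}(D)$ with zero boundary trace, but elements of $\Cc^{2,+}_0(D)$ are only $\Cc^2$ in the interior: their second derivatives may blow up near $\partial D$ fast enough that $L\psi\,\eta\notin L^1(D)$ (already in $d=1$, $\psi(x)=x+x^{3/2}\sin(1/x)$ near an endpoint has $\abs{\psi''}\sim x^{-5/2}$ along a sequence, while $\eta$ vanishes at most linearly), so the chain $\int_D\cG\psi\,\eta\le\int_D L\psi\,\eta=\lambda_D(\cG)\int_D\psi\,\eta$ is not even well defined. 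The suggested repair does not obviously work: cutting $\psi$ off by $\chi_k$ creates commutator terms involving $\grad\chi_k\cdot\grad\psi$ and $\psi\,\grad^2\chi_k$ in a boundary layer, and $\grad\psi$ is likewise uncontrolled there; alternatively, applying Green's identity on interior domains $D_k\Subset D$ with the adjoint eigenfunction $\eta_k$ of $D_k$ produces the boundary term $-\int_{\partial D_k}\psi\,a^{ij}\nu_i\nu_j\,\partial_\nu\eta_k\,\D{S}\ge0$, which has the unfavorable sign, so the desired inequality cannot be closed by exhaustion either. (The other asserted ingredients --- existence, a.e.\ positivity and integrability of $\eta$ for a drift that is only Borel measurable after the selection step, and the identification of $\lambda_D(\cG)$ with the Krein--Rutman eigenvalue of $L$, which needs $\Psi_D\in\Sob^{2,p}(D)$ up to the boundary --- are true and fixable, but are stated rather than proved.) As written, your argument establishes \cref{ET2.1A} only for test functions with $\Sob^{2,p}$ regularity up to $\partial D$; the paper's appeal to Quaas--Sirakov is precisely what handles the full class $\Cc^{2,+}_0(D)$ without any adjoint machinery.
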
 

\begin{remark}
The function space $\Cc^{2,+}_0(D)$ in the representation formula \eqref{ET2.1A}
cannot, in general, be enlarged to $\Cc^{2, +}(D)$.
To see this consider any smooth domain $D_1$
strictly containing $D$. Let $\lambda_1=\lambda_{D_1}$ be the Dirichlet
principal eigenvalue of $\cG$ in $D_1$, and $\Psi_1$
denote the corresponding (positive) principal eigenfunction.
It is known that $\lambda_1>\lambda_D(\cG)$ \cite[Remark~3]{Quaas-08a}.
Take $\psi=\Psi+\Psi_1$. 
Then by the concavity of $\cG$, we have
\begin{equation*}
\inf_{\mu\in\cP(D)}\;\int_{D}\frac{\cG\psi}{\psi}\,\ge\,\min_{\bar D}\,
\frac{\lambda_1\Psi_1+\lambda_D(\cG)\Psi}{\Psi_1+\Psi}>\lambda_D\,.
\end{equation*}
\end{remark}

Our next goal is to establish a similar characterization for the generalized
principal eigenvalue of $\cG$ in $\Rd$.
To begin with, we consider the uncontrolled problem.
In this case, we have a linear operator of the form
\begin{equation}\label{E-Lg}
\Lg f(x)\,\df\,  a^{ij}(x)\,\partial_{ij} f(x)
+ b^{i}(x)\, \partial_{i} f(x) + c(x) f(x)\quad \text{in}\ \Rd\,.
\end{equation}
Here, we assume that $b, c$ are locally bounded, Borel measurable functions,
and that
$a$ is continuous and satisfies (A3).
We recall the definition of  the principal eigenvalue of
$\Lg$ from \cite{Berestycki-15},
denoted as $\lamstr(\Lg)$.
\begin{equation}\label{E-lamL}
\begin{aligned}
\lamstr(\Lg) &\,\df\,\inf\,
\bigl\{\lambda\in\RR\,\colon \exists\, \psi\in\Sobl^{2, d}(\Rd),\;
\psi>0,  \\
&\mspace{250mu}\text{\ satisfying\ } \Lg\psi-\lambda\psi\le 0
\text{\ a.e.\ in\ } \Rd\bigr\}\,.
\end{aligned}
\end{equation}
Note the analogy between \cref{E-lamD} and \cref{E-lamL}.

We start by showing that if
$\Lg$ has smooth coefficients, and
$\lamstr(\Lg)<\infty$, then
\begin{equation}\label{E-general}
\lamstr(\Lg)\,=\,
\inf_{\psi\in\Cc^{2,+}(\Rd)}\;
\sup_{\mu\in\cP(\Rd)}\;\int_{\Rd}\frac{\Lg\psi}{\psi}\, \D{\mu}\,.
\end{equation}
This is essentially in (1.12)--(1.13) of \cite{Berestycki-94}.
We can prove this from the definition of $\lamstr(\Lg)$ and the existence
of an eigenfunction, or can use the following argument.
If not, then there exists $\psi\in\Cc^{2,+}(\Rd)$ and $\epsilon>0$
such that
$\Lg\psi< (\lamstr(\Lg)-\epsilon)\psi$ on $\Rd$.
Let $\lambda_n$ denote the principal eigenvalue of $\Lg$ in $B_n$,
and choose $n$ large enough so that $\lambda_n> \lamstr(\Lg)-\epsilon$.
With $\psi_n$ denoting the principal eigenfunction on $B_n$ we have
\begin{equation*}
\Lg(\psi-\psi_n) - \lambda_n(\psi-\psi_n)
\,\le\, (\lamstr(\Lg)-\epsilon-\lambda_n)\psi\,.
\end{equation*}
Scaling $\psi$ so that it touches $\psi_n$ at some point from above,
and applying the strong maximum principle, we obtain
$\psi=\psi_n$ on $B_n$, which is not possible since $\psi_n$ vanishes
on $\partial B_n$.
The analogous result holds for the semilinear operator $\cG$.

We next show that the Collatz--Wielandt formula in \cref{ET2.1A}
does not hold, in general, for $\lamstr(\Lg)$.
Consider the generalized eigenvalues $\lambda'(\Lg)$ and $\lambda''(\Lg)$
defined  by
\begin{align*}
\lambda'(\Lg)&\,\df\,\sup\,\bigl\{\lambda\in\RR\,\colon\exists\,
\psi\in\Sobl^{2,d}(\Rd)\cap L^\infty(\Rd),
\; \psi>0, \\
&\mspace{320mu}\text{\ satisfying\ } \Lg\psi-\lambda\psi\ge 0 \text{\ a.e.\ in\ }
\Rd\bigr\}\,,\\[5pt]
\lambda''(\Lg)&\,\df\,\inf\,\Bigl\{\lambda\in\RR\,\colon\exists\,
\psi\in\Sobl^{2,d}(\Rd),
\; \inf_{\Rd}\,\psi>0, \\
&\mspace{320mu}\text{\ satisfying\ } \Lg\psi-\lambda\psi\le 0 \text{\ a.e.\ in\ }
\Rd\Bigr\}\,.
\end{align*}
It is known that, in general, $\lamstr(\Lg)\le \lambda'(\Lg)$
(see \cite[Theorem~1.7]{Berestycki-15}).
But this inequality might be strict \cite{BR06}.

\begin{example}\label{Eg2.1}
We borrow this example from \cite{BR06}.
Consider the operator $\Lg\phi\df\phi^{\prime\prime}-\phi^\prime$,
with $d=1$.
If $\psi\in L^\infty(\RR)$ satisfies
$\Lg\psi-\lambda\psi\ge 0$, then
applying the It\^{o}--Krylov formula
we obtain $\psi(x) \le \E^{-\lambda t} \norm{\psi}_\infty$ for all $t\ge0$.
Taking logarithms, it follows that $\lambda\le0$.
On the other hand, for $\psi=1$ we have $\Lg\psi=0$ and therefore,
we obtain
\begin{equation*}
\sup_{\psi\in\Cc^{2,+}_b(\RR)}\; \inf_{\mu\in\cP(\RR)}\,
\int_{\RR}\frac{\Lg \psi}{\psi}\,\D{\mu}
\,\ge\, 0\,=\,\lambda'(\Lg)\,.
\end{equation*}
For $R>0$, with $\phi_R(x)=\cos(\frac{\pi}{2R}x)\exp(\frac{x}{2})$,
we have
\begin{equation*}
\Lg\phi_R \,=\,
-\biggl(\frac{1}{4}+\frac{\pi^2}{4R^2}\biggr)\phi_R\quad \text{in\ } [-R, R]\,.
\end{equation*}
Using \cite[Proposition~3.1]{BR06} we deduce that
$\lamstr(\Lg)=\lim_{R\to\infty}-(\frac{1}{4}+\frac{\pi^2}{4R^2})=-\frac{1}{4}$.
Thus we obtain
\begin{equation*}
\lamstr(\Lg)\,<\, \sup_{\psi\in\Cc^{2,+}_b(\RR)}\; \inf_{\mu\in\cP(\RR)}\,
\int_{\RR}\frac{\Lg \psi}{\psi}\, \D{\mu}\,.\end{equation*}
\end{example}

In analogy to \cref{E-lamL} we define the principal eigenvalue on $\Rd$ of
the semilinear operator $\cG$ as follows
\begin{equation}\label{E-lamstr}
\begin{aligned}
\lamstr(\cG) &\,\df\,
\inf\,\bigl\{\lambda\in\RR\,\colon \exists\, \psi\in\Cc^2(\Rd),\; \psi>0,\\
&\mspace{250mu}
\text{\ satisfying\ }  \cG\psi-\lambda\psi\le 0 \text{\ a.e.\ in\ } \Rd\bigr\}\,.
\end{aligned}
\end{equation}
As in the case of the linear operator, we have the following characterization
of the
principal eigenvalue for the semilinear operator.

\begin{lemma}\label{L2.1}
Let $\lambda_n$ be the principal eigenvalue of $\cG$ in $B_n$ i.e.,
for some positive $\Psi_n\in\Cc^2(B_n)\cap\Cc(\bar{B}_n)$ we have
\begin{equation*}
\cG\Psi_n\,=\,\lambda_n\Psi_n\quad \text{in}\ B_n\,,
\quad \Psi_n=0\quad \text{on}\; \partial B_n\,.
\end{equation*}
Then $\lim_{n\to\infty} \lambda_n=\lamstr(\cG)$.
\end{lemma}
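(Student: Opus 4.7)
The plan is to sandwich $\lambda_\infty\df\lim_n\lambda_n$ between $\lamstr(\cG)$ from both sides: the upper bound $\lambda_\infty\le\lamstr(\cG)$ is routine, while the matching inequality $\lamstr(\cG)\le\lambda_\infty$ requires extracting a positive global eigenfunction of $\cG$ at level $\lambda_\infty$ from the sequence $\{\Psi_n\}$.

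For the upper bound, I would first observe that $\{\lambda_n\}$ is nondecreasing. By the strong maximum principle, $\Psi_{n+1}>0$ on $\bar{B}_n$; since $\cG\Psi_{n+1}=\lambda_{n+1}\Psi_{n+1}$ in $B_n$, the restriction $\Psi_{n+1}\big|_{\bar{B}_n}$ is admissible in the definition \eqref{E-lamD} of $\lambda_n$, yielding $\lambda_n\le\lambda_{n+1}$. Likewise, any positive $\psi\in\Cc^2(\Rd)$ satisfying $\cG\psi-\lambda\psi\le 0$ restricts to an admissible test on $\bar{B}_n$, so $\lambda_n\le\lambda$; taking the infimum over such $\lambda$ gives $\lambda_n\le\lamstr(\cG)$, hence $\lambda_\infty\le\lamstr(\cG)$.

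For the converse, I would construct a witness $\Psi$ for $\lamstr(\cG)\le\lambda_\infty$ as a locally uniform limit of the $\Psi_n$. With the normalization $\Psi_n(0)=1$, the Krylov--Safonov Harnack inequality (applied to the linear operator obtained from \eqref{E-cG} by a measurable selection of the minimizer in $\cG-\lambda_n$) gives, for each $R>0$ and all $n\ge R+1$, an estimate $\sup_{B_R}\Psi_n\le C_R\inf_{B_R}\Psi_n$ with $C_R$ independent of $n$, so $\Psi_n$ is uniformly bounded above and below by positive constants on every compact set. Interior $W^{2,p}$ estimates then yield uniform $\Sobl^{2,p}$ bounds for every $p<\infty$, and Sobolev embedding provides uniform $\Ccl^{1,\alpha}$ bounds. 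Along a diagonal subsequence, $\Psi_{n_k}\to\Psi$ in $\Ccl^{1,\alpha}(\Rd)$ and weakly in $\Sobl^{2,p}(\Rd)$, with $\Psi(0)=1$ and $\Psi>0$ by passing Harnack to the limit. Because the nonlinearity in $\cG$ appears only in first-order terms, the $\Ccl^{1,\alpha}$ convergence of $\Psi_{n_k}$ combined with the weak convergence of the second derivatives suffices to pass to the limit in $\cG\Psi_{n_k}=\lambda_{n_k}\Psi_{n_k}$, yielding $\cG\Psi=\lambda_\infty\Psi$ a.e.\ in $\Rd$. A Schauder / Evans--Krylov bootstrap (using that $a$ is locally Lipschitz by (A1) and $\cG$ is concave) upgrades $\Psi$ to $\Cc^2(\Rd)$, so $\Psi$ is admissible in \eqref{E-lamstr} at level $\lambda_\infty$, completing the proof.

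The main obstacle is guaranteeing strict positivity of the limit $\Psi$ on all of $\Rd$; this is precisely what the Harnack inequality provides uniformly in $n$, thanks to the fixed normalization at the origin. Once that is in hand, the passage to the limit in the concave Bellman operator $\cG$ is fairly routine, since the minimum over $u\in\Act$ in \eqref{E-cG} commutes with locally uniform convergence of $(\Psi_{n_k},\nabla\Psi_{n_k})$ by continuity of $(b,c)$ on $\Rd\times\Act$ and compactness of $\Act$.
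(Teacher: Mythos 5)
Your proposal is correct and follows essentially the same route as the paper: the inequalities $\lambda_n\le\lambda_{n+1}\le\lamstr(\cG)$ come straight from the definitions \eqref{E-lamD} and \eqref{E-lamstr}, and the reverse inequality is obtained by producing a positive $\Cc^2(\Rd)$ eigenfunction at the level $\lim_n\lambda_n$ as a limit of the Dirichlet eigenfunctions. The only difference is that the paper compresses your Harnack/$\Sob^{2,p}$/diagonal-subsequence construction into ``a standard argument of elliptic PDE'' with citations, whereas you spell out those details (correctly).
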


\begin{proof}
In view of \cref{E-lamD}, we note that $\lambda_n\le \lambda_{n+1}$ for all $n$.
By the definition in \cref{E-lamstr} we have $\lambda_n\le \lamstr(\cG)$.
Thus, $\lim_{n\to\infty}\lambda_n=\Hat\lambda\le \lamstr(\cG)$.
Using a standard argument of elliptic PDE,
we can find a positive $\Hat\Phi\in\Cc^2(\Rd)$ satisfying
\begin{equation*}
\cG\Hat\Phi \,=\, \Hat\lambda\,\Hat\Phi \quad \text{in}\ \Rd\,.
\end{equation*}
See for instance \cite{ari-anup,biswas-11a}.
By \cref{E-lamstr}, we then have $\lamstr(\cG)\le\Hat\lambda$.
Therefore, $\lim_{n\to\infty}\lambda_n=\lamstr(\cG)$.
\end{proof}

Next, consider the extremal operator $\cH$ defined by
\begin{equation}\label{E-Lh}
\cH f(x) \,\df\, a^{ij}(x)\,\partial_{ij} f(x)
+ \max_{u\in\Act}\,\bigl[b^{i}(x,u)\, \partial_{i} f(x) + c(x,u) f(x)\bigr]\,.
\end{equation}
This operator corresponds to the maximization problem of
the risk-sensitive ergodic average.
The principal eigenvalue $\lambda_D(\cH)$ is defined
in the same fashion as in \eqref{E-lamD}.
We show that \cref{ET2.1B} holds for the operator $\cH$ with the `inf' and `sup'
in reverse order.
This result is also used in \cref{T2.2} below.

\begin{theorem}\label{T2.18}
Let $\lambda_D(\cH)$
be the Dirichlet principal eigenvalue of $\cH$, where $D$ is
a bounded smooth domain or $\Rd$.
Then we have
\begin{equation}\label{ET2.18A}
\sup_{\mu\in\cP(D)}\; \inf_{\psi\in\Cc^{2,+}(D)}\; 
\int_{D}\frac{\cH\psi}{\psi}\, \D{\mu}\,=\,\lambda_D(\cH).
\end{equation}
\end{theorem}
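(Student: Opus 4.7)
The strategy is to establish the two inequalities separately. For the easy direction ``$\le$'', observe that for any continuous function $f$ on $D$ one has $\sup_{\mu\in\cP(D)}\int f\,\D\mu=\sup_{x\in D}f(x)$ (the lower bound is attained in the limit by Dirac masses, and the upper bound is trivial). Applied to $f=\cH\psi/\psi$ (which is continuous on $D$ since $a,b,c,\psi$ are and since the maximum over the compact set $\Act$ of a jointly continuous function is continuous) this yields
\[
\sup_{\mu\in\cP(D)}\;\inf_{\psi\in\Cc^{2,+}(D)}\int_D\frac{\cH\psi}{\psi}\,\D\mu\;\le\;\inf_{\psi\in\Cc^{2,+}(D)}\;\sup_{x\in D}\frac{\cH\psi(x)}{\psi(x)}.
\]
The right-hand side is exactly $\lambda_D(\cH)$ by the defining condition \eqref{E-lamD} applied to $\cH$: the inequality $\cH\psi-\lambda\psi\le 0$ pointwise in $D$ is equivalent to $\sup_D\cH\psi/\psi\le\lambda$, and the infimum is realized by the principal eigenfunction $\Psi$, for which $\cH\Psi/\Psi\equiv\lambda_D(\cH)$.

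For the reverse direction ``$\ge$'' I exhibit an explicit probability measure $\mu^*$ on $D$ for which $\int_D\cH\psi/\psi\,\D\mu^*\ge\lambda_D(\cH)$ holds for every $\psi\in\Cc^{2,+}(D)$. By the nonlinear Krein--Rutman theory applied to $\cH$ there exists a principal eigenpair $(\lambda_D(\cH),\Psi)$ with $\Psi\in\Cc^2(D)\cap\Cc(\bar D)$, $\Psi>0$ in $D$, and $\Psi=0$ on $\partial D$. Let $u^*\colon D\to\Act$ be a Borel measurable selector attaining the maximum in \eqref{E-Lh}, and introduce the associated linear operator $\mathcal{M} f\df a^{ij}\partial_{ij}f+b^i(\cdot,u^*)\partial_i f+c(\cdot,u^*)f$, so that $\mathcal{M}\Psi=\lambda_D(\cH)\Psi$ with Dirichlet data. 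Let $\rho^*$ denote a positive Dirichlet eigen-density for the formal adjoint $\mathcal{M}^\top$; its existence follows from Krein--Rutman applied to $\mathcal{M}^\top$, with the merely measurable coefficient $b(\cdot,u^*)$ handled by the standard $W^{2,p}$ theory for uniformly elliptic operators with bounded measurable first-order coefficients. I then define $\mu^*$ as the normalization of the finite positive measure $\Psi\rho^*\,\D x$; since $\Psi$ vanishes on $\partial D$, $\mu^*\in\cP(D)$.

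The core of the proof is a ground-state (Doob $h$-) transform. Fix $\psi\in\Cc^{2,+}(D)$, which may be assumed bounded below on $\bar D$ by a positive constant (otherwise replace $\psi$ by $\psi+\epsilon$ and pass to the limit $\epsilon\downarrow 0$). Setting $g\df\log(\psi/\Psi)$, a direct calculation using $\mathcal{M}\Psi/\Psi=\lambda_D(\cH)$ yields
\[
\frac{\mathcal{M}\psi}{\psi}\,=\,\lambda_D(\cH)+\widetilde{\mathcal{M}}g+a^{ij}\partial_i g\,\partial_j g,
\]
where $\widetilde{\mathcal{M}}g\df a^{ij}\partial_{ij}g+\bigl(b^i(\cdot,u^*)+2a^{ij}\partial_j\log\Psi\bigr)\partial_i g$ is the drift-transformed operator, satisfying the intertwining $\Psi\,\widetilde{\mathcal{M}}g=(\mathcal{M}-\lambda_D(\cH))(\Psi g)$. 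Integrating against $\rho^*\,\D x$ and invoking the adjoint eigen-relation for $\rho^*$, together with the vanishing boundary behavior $\Psi g=\Psi\log\psi-\Psi\log\Psi\to 0$ on $\partial D$ (recall $x\log x\to 0$ as $x\downarrow 0$), produces $\int_D\widetilde{\mathcal{M}}g\,\D\mu^*=0$. Hence
$\int_D\mathcal{M}\psi/\psi\,\D\mu^*=\lambda_D(\cH)+\int_D a^{ij}\partial_i g\,\partial_j g\,\D\mu^*\ge\lambda_D(\cH)$,
and since $\cH\psi\ge\mathcal{M}\psi$ pointwise, the desired bound $\int_D\cH\psi/\psi\,\D\mu^*\ge\lambda_D(\cH)$ follows.

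The principal obstacle lies in the rigorous construction of $\rho^*$ and in justifying the integration-by-parts step, since the selector $u^*$ is only Borel measurable. Both are handled by the standard $W^{2,p}$ regularity theory for uniformly elliptic operators with bounded measurable coefficients, combined with a cut-off approximation of $\Psi g$ whose vanishing boundary trace kills all boundary contributions. For $D=\Rd$ the same scheme applies, the existence of an invariant probability measure $\mu^*$ for the $h$-transformed diffusion being supplied by the blanket geometric ergodicity hypotheses \cref{A2.1,A2.2}; in this case $\partial D=\emptyset$, so the boundary analysis is trivially bypassed but integrability of $g$ against $\mu^*$ must be controlled via the ergodicity estimates.
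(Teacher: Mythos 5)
Your route is genuinely different from the paper's. The paper proves the hard inequality ``$\ge$'' without ever constructing an optimal measure: it restricts the supremum to measures on a compactly contained subdomain $\Bar D_n$ (so that $\cP(\Bar D_n)$ is weak* compact), passes to the logarithmic variable so that $\psi\mapsto\cH\psi/\psi$ becomes the convex functional $\Tilde\cH\phi=\cH\phi+\langle\grad\phi,a\grad\phi\rangle$, swaps sup and inf by Sion's minimax theorem, bounds the resulting $\inf_\psi\max_{\Bar D_n}\cH\psi/\psi$ below by $\lambda_{D_n}(\cH)$ via Quaas--Sirakov, and finally lets $D_n\uparrow D$; this works verbatim for $D=\Rd$ with no hypotheses beyond (A1)--(A3). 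Your proposal instead builds the Donsker--Varadhan ground-state measure $\mu^*\propto\Psi\rho^*\,\D x$ for a linear operator $\mathcal{M}$ obtained from a maximizing selector at the eigenfunction, and uses $\cH\psi\ge\mathcal{M}\psi$. The algebraic core (the $h$-transform identity and the intertwining $\Psi\widetilde{\mathcal{M}}g=(\mathcal{M}-\lambda_D(\cH))(\Psi g)$) is correct, and for bounded $D$ this is a legitimate alternative in spirit, but two of your justifications do not hold as stated. First, since $u^*$ is only Borel measurable, $\mathcal{M}^{\top}$ is a double-divergence-form operator with merely measurable drift; its principal eigen-object is an eigen-\emph{measure} obtained by duality (Krein--Rutman on the dual of the resolvent), not a $W^{2,p}$ density, so ``standard $W^{2,p}$ theory'' does not deliver $\rho^*$ with the regularity your integration by parts presumes — the duality relation has to be used directly, and the cut-off estimates near $\partial D$ (where $\grad(\Psi g)$ and $\mathcal{M}(\Psi g)$ blow up) are exactly the delicate step, not a routine one. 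Second, the reduction to $\psi$ bounded below via $\psi+\epsilon$ does not close: from $\int\cH(\psi+\epsilon)/(\psi+\epsilon)\,\D\mu^*\ge\lambda_D(\cH)$ you cannot pass to $\epsilon\downarrow0$ without a reverse-Fatou majorant, i.e.\ $\mu^*$-integrability of $(\cH\psi)^+/\psi$, which is not available for general $\psi\in\Cc^{2,+}(D)$ vanishing on $\partial D$; the class $\Cc^{2,+}(D)$ in \eqref{ET2.18A} does contain such $\psi$, so this case cannot be waved away.

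The clearest gap is the case $D=\Rd$. \cref{T2.18} is stated (and proved in the paper) without \cref{A2.1,A2.2}; your argument for $\Rd$ invokes these blanket ergodicity assumptions to produce an invariant probability measure for the twisted diffusion, and it also presupposes an eigenpair for $\cH$ on $\Rd$ and ergodicity of the ground-state process — facts of the type in \cref{T2.3}, which are proved there for $\cG$ under those extra assumptions, not for $\cH$ unconditionally. Without some such hypotheses the twisted process need not admit an invariant probability measure at all (and no suitable $\mu^*$ need exist as an explicit ground-state measure), so your scheme cannot prove the theorem in the stated generality. To repair the whole-space case one essentially has to fall back on an exhaustion by bounded domains together with $\lambda_{B_n}(\cH)\to\lamstr(\cH)$, which is the paper's mechanism.
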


We return to the operators $\Lg$ and $\cG$ to state the main results.
Note that the process associated the operator $\Lg$ in \cref{Eg2.1} is transient.
Our first result establishes a Collatz--Wielandt formula for
$\lamstr(\Lg)$,
when the underlying process is recurrent, and $c$ is bounded.
We let $\Cc^{2,+}_b(\Rd)\df \Cc_b^+(\Rd)\cap\Cc^{2}(\Rd)$,
where $\Cc_b^+(\Rd)$ denotes the set of positive bounded functions on $\Rd$.
Also, $\Cc^{2,+}(\Rd)$ denotes the class of positive functions
in $\Cc^{2}(\Rd)$.

\begin{theorem}\label{T2.2}
Consider the linear operator $\Lg$ in \cref{E-Lg}, and assume that
$b$ and $c$ are locally H\"older continuous, and 
$c$ is a function that vanishes at infinity.
Suppose that the process $X$ is recurrent.
Then, if $\lamstr(\Lg) > 0$, we have
\begin{equation}\label{ET2.2A}
\begin{aligned}
\lamstr(\Lg) &\,=\, \sup_{\psi\in\Cc^{2,+}_b(\Rd)}\;
\inf_{\nu\in\cP(\Rd)}\;\int_{\Rd}\frac{\Lg\psi}{\psi}\, \D{\nu}\\
&\,=\, \inf_{\psi\in\Cc^{2,+}_b(\Rd)}\;
\sup_{\nu\in\cP(\Rd)}\; \int_{\Rd}\frac{\Lg\psi}{\psi}\, \D{\nu}\\
&\,=\,
\sup_{\nu\in\cP(\Rd)}\;\inf_{\psi\in\Cc^{2,+}_b(\Rd)}\;
\int_{\Rd}\frac{\Lg\psi}{\psi}\, \D{\nu} \,.
\end{aligned}
\end{equation}
In general, i.e., independent on the sign of $\lamstr(\Lg)$,
\cref{ET2.2A} holds if we replace
$\Cc^{2,+}_b(\Rd)$ with $\Cc^{2,+}(\Rd)$ in the  second and third equalities.
Moreover, the first equality also holds for $\lamstr(\Lg)\le0$.
The analogous result for the first two equalities holds for the
semilinear operator $\cG$ in \cref{E-cG},
provided that $\sup_{(x,u)\in B_n^c\times\Act}\, c(x,u) \to 0$
as $n\to\infty$,
and under the assumption that the process $X$ is recurrent under any
stationary Markov control.
\end{theorem}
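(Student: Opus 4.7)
The plan is to combine the multiplicative Feynman--Kac representation of positive sub- and supersolutions with the Donsker--Varadhan variational principle. The cornerstone is the growth-rate identity
\begin{equation*}
\lamstr(\Lg)\,=\,\lim_{t\to\infty}\,\frac{1}{t}\,\log\,\Exp^x\biggl[\exp\int_0^t c(X_s)\,\D{s}\biggr],\qquad x\in\Rd,
\end{equation*}
which I would establish first under the standing hypotheses. The ``$\le$'' direction comes from applying It\^o's formula to the principal eigenfunction $\Psi^*$ (produced by \cref{L2.1}), giving $\Psi^*(x)=\Exp^x[\Psi^*(X_t)\,\E^{\int_0^t(c-\lamstr(\Lg))\,\D{s}}]$; recurrence ensures $\Psi^*$ is bounded below on a fixed ball visited infinitely often by $X$, which isolates the exponential factor. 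The ``$\ge$'' direction is the Donsker--Varadhan lower bound, valid under recurrence. When $\lamstr(\Lg)>0$, the same Feynman--Kac representation applied at the hitting time $\tau_R$ of a large ball $B_R$ outside of which $c\le\lamstr(\Lg)/2$ also yields $\sup_{\Rd}\Psi^*<\infty$, so that $\Psi^*\in\Cc^{2,+}_b(\Rd)$.

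Given the growth-rate identity, the bound ``$\sup_\psi\inf_\nu\le\lamstr(\Lg)$'' in the first equality of \cref{ET2.2A} is a short submartingale argument. Since $\inf_{\nu\in\cP(\Rd)}\int f\,\D{\nu}=\inf_{\Rd}f$ for continuous $f$, suppose $\psi\in\Cc^{2,+}_b(\Rd)$ satisfies $\Lg\psi\ge(\lamstr(\Lg)+\epsilon)\psi$ everywhere. Then $M_t\df\psi(X_t)\exp\int_0^t(c(X_s)-\lamstr(\Lg)-\epsilon)\,\D{s}$ is a bounded local submartingale, so
\begin{equation*}
\psi(x)\,\le\,\Exp^x[M_t]\,\le\,\norm{\psi}_\infty\,\Exp^x\!\biggl[\exp\int_0^t c(X_s)\,\D{s}\biggr]\,\E^{-(\lamstr(\Lg)+\epsilon)t};
\end{equation*}
taking $\tfrac{1}{t}\log$ and $t\to\infty$ contradicts the identity. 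The reverse inequality is realized by $\psi=\Psi^*\in\Cc^{2,+}_b(\Rd)$, for which $\frac{\Lg\Psi^*}{\Psi^*}\equiv\lamstr(\Lg)$. The bound ``$\inf_\psi\sup_\nu\le\lamstr(\Lg)$'' in the second equality follows from $\Psi^*$ as well, while the minimax inequality gives $\sup_\nu\inf_\psi\le\inf_\psi\sup_\nu$; combined with the first equality and the Donsker--Varadhan representation $\inf_\psi\int\frac{\Lg\psi}{\psi}\,\D{\nu}=\int c\,\D{\nu}-I(\nu)$ (where $I$ is the Donsker--Varadhan rate function associated with the diffusion part of $\Lg$), the third equality reduces to the classical D--V identity for the growth rate, which coincides with $\lamstr(\Lg)$ by the first paragraph.

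When $\lamstr(\Lg)\le 0$, the eigenfunction $\Psi^*$ need not be bounded; enlarging the test class to $\Cc^{2,+}(\Rd)$ in the second and third equalities lets me plug $\Psi^*$ in directly. The first equality still holds on $\Cc^{2,+}_b(\Rd)$ because the submartingale bound above uses only boundedness of the test $\psi$; the matching ``$\ge$'' direction is recovered by an approximation, replacing $\Psi^*$ with the bounded smooth modification $\Psi^*/(1+\eta\Psi^*)$ and sending $\eta\downarrow 0$ after controlling the resulting defect via \cref{L2.1}. For the semilinear operator $\cG$, a measurable minimizing selector $v^*\colon\Rd\to\Act$ in \cref{E-cG} reduces the eigenfunction problem to the linear equation for $\cL_{v^*}+c(\cdot,v^*)$, so the first-paragraph analysis applies; the submartingale step becomes a \emph{family} of inequalities indexed by stationary Markov controls, and the right-hand side of the growth-rate identity becomes an infimum over admissible controls, coinciding with $\lamstr(\cG)$ by the risk-sensitive control results of \cite{ari-anup,biswas-11a}.

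The main obstacle is the ``$\limsup\le\lamstr(\Lg)$'' direction of the growth-rate identity when $\lamstr(\Lg)>0$: without $c$ vanishing at infinity, as \cref{Eg2.1} shows, this limsup realizes $\lambda'(\Lg)$ rather than $\lamstr(\Lg)$, producing a genuine gap. Boundedness of $\Psi^*$ in the $\lamstr(\Lg)>0$ regime and the $\Psi^*/(1+\eta\Psi^*)$ approximation used when $\lamstr(\Lg)\le 0$ are related, secondary difficulties that the hitting-time estimates above resolve.
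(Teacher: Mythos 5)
Your argument funnels entirely through the growth-rate identity $\lamstr(\Lg)=\lim_{t\to\infty}\tfrac1t\log\Exp_x\bigl[\E^{\int_0^t c(X_s)\D{s}}\bigr]$, and the direction $\limsup_{t\to\infty}\tfrac1t\log\Exp_x[\,\cdot\,]\le\lamstr(\Lg)$ is precisely where the proposal has no proof. The It\^o/supermartingale step only gives $\Psi^*(x)\ge\Exp_x\bigl[\Psi^*(X_t)\E^{\int_0^t(c-\lamstr(\Lg))\D{s}}\bigr]$ (equality needs a uniform integrability you do not verify), and to extract the upper bound on the growth rate from this you need $\Psi^*(X_t)$ bounded below by a positive constant; but when $\lamstr(\Lg)>0$ and $c$ vanishes at infinity the eigenfunction typically decays at infinity, and the fact that a recurrent path visits a fixed ball infinitely often says nothing about where $X_t$ sits at the deterministic time $t$. ``Isolating the exponential factor'' would require a return-probability estimate that is uniform over starting points in $\Rd$, which is false and in any case not supplied by recurrence; note also that \cref{Eg2.1} fails because of transience, not because $c$ (which is $\equiv0$ there) fails to vanish at infinity. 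The paper never uses a growth-rate identity: for $\lamstr(\Lg)>0$ it represents a bounded $\psi$ satisfying $\Lg\psi\ge(\lamstr(\Lg)+\epsilon)\psi$ by Feynman--Kac at the hitting time $\uuptau_r$ (the outer boundary term dies because $\psi$ is bounded and $\Prob_x(\uptau_R<\uuptau_r)\to0$ by recurrence), compares with the exact representation of the bounded eigenfunction, and concludes with the strong maximum principle; for $\lamstr(\Lg)\le0$ it switches to the Berestycki--Rossi quantities, using $\lamstr=\lambda'$ in the recurrent case and $\lambda''\ge\lambda'$ when $\lamstr<0$.

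The remaining steps have gaps of their own. For the third equality you invoke ``the classical Donsker--Varadhan identity'' and a lower bound ``valid under recurrence''; under mere (possibly null) recurrence with only locally regular coefficients there is no exponential tightness, hence no classical LDP for occupation measures, and identifying $\sup_\nu\bigl(\int c\,\D\nu-I(\nu)\bigr)$ with $\lamstr(\Lg)$ is part of what is being proved, so the argument is circular as written; the paper instead obtains this equality deterministically from the proof of \cref{T2.18}, i.e.\ Sion's minimax theorem on compact subdomains after the convexifying substitution $\Tilde{\cH}\psi=\cH\psi+\langle\grad\psi,a\grad\psi\rangle$. For $\lamstr(\Lg)\le0$ your truncation $\Psi^*/(1+\eta\Psi^*)$ produces the It\^o correction $-2\eta\langle\grad\Psi^*,a\grad\Psi^*\rangle\big/\bigl((1+\eta\Psi^*)^2\Psi^*\bigr)$ in $\Lg\phi/\phi$, for which no uniform bound on $\Rd$ is available, so the ``defect'' is not controlled by \cref{L2.1} as claimed. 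Two further points: the boundedness of the eigenfunction when $\lamstr(\Lg)>0$ should be run on the Dirichlet eigenfunctions $\Psi_n$ and passed to the limit (as in the paper), because the hitting-time identity for $\Psi^*$ itself is a priori only an inequality in the wrong direction; and for the semilinear part, citing the risk-sensitive results of \cite{ari-anup,biswas-11a} to identify the optimal growth rate with $\lamstr(\cG)$ is not justified here, since those results assume near-monotonicity or smallness conditions absent from the hypotheses of \cref{T2.2}, whereas the paper only needs the selector comparison $\cG\psi\le\Lg_{v_*}\psi$ together with the linear case already established.
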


Given a set $A$, the first exit time from $A$ is denoted by
\begin{equation*}
\uptau(A) \,=\, \inf\{t>0\,\colon X_t\notin A\}\,.
\end{equation*}
For the first hitting time to the ball $B_r$ we use the abbreviated
notation $\uuptau_r=\uptau(B^c_r)$.
We also let $\uptau_{r}\df \uptau(B_{r})$.

\begin{remark}
Suppose $\lamstr(\Lg)<0$,
$c\in\Cc_0(\Rd)$, $a$ and $b$ are bounded, and
the diffusion is geometrically ergodic.
Then there is no $\psi\in \Cc^{2, +}_b(\Rd)$ satisfying
$$\sup_{\mu\in\cP(\Rd)}\, \int_{\Rd} \frac{\Lg\psi}{\psi}\, \D{\mu}
\,=\, \sup_{\Rd} \frac{\Lg\psi}{\psi}\,<\,0\,.$$
Otherwise, we would have
$\Lg\psi+2\delta\psi\le 0$ for some $\delta>0$.
Applying It\^o's formula and the fact $\lim_{\abs{x}\to \infty} c(x)=0$ we obtain
\begin{equation*}
\psi(x)\,\ge\,
\Exp_x\Bigl[e^{\delta \uuptau_r}\psi(X_{\uuptau_r})\Bigr],
\quad \text{for large enough }\, r\,.
\end{equation*}
But the right hand side is unbounded, resulting in $\psi$ being unbounded.
This contradicts the fact $\psi\in\Cc^{2, +}_b(\Rd)$.
Thus in this case
\begin{equation*}
\lamstr(\Lg)\,<\, \inf_{\psi\in \Cc^{2, +}_b(\Rd)}\,
\sup_{\mu\in\cP(\Rd)}\, \int_{\Rd} \frac{\Lg\psi}{\psi}\, \D{\mu}\,.
\end{equation*}

On the other hand,
if $X$ is null-recurrent and $c\in\Cc_0(\Rd)$,
then $\lamstr(\Lg)$ cannot be nonzero if the principal
eigenfunction is bounded. 
For if $\Psi^*$ is bounded, then applying It\^o's formula it is easy to see that
\begin{equation*}
\Exp_x\biggl[\int_0^T
\bigl(c(X_t) \Psi^*(X_t)
-\lamstr(\Lg)\Psi^*(X_t)\bigr) \,\D{t} \biggr]\,=\,0\,.
\end{equation*}
Note that $\inf_{\Rd}\Psi^*>0$ by \cite[Lemma~2.1]{ari-anup},
since $\lamstr(\Lg)<0$.
Now divide both sides by $T$ and let
$T\to\infty$ to assert that $\lamstr(\Lg)=0$.
\end{remark}

\begin{remark}
\cref{T2.2} offers a variational formula for the principal eigenvalue
in the spirit of \cite{DoVa-76b}.
If we define
$\Lg_0 f(x)\df  a^{ij}(x)\,\partial_{ij} f(x)
+ b^{i}(x)\, \partial_{i} f(x)$, and the rate function
\begin{equation*}
I(\nu) \,\df\, - \inf_{f \in \Cc^{2, +}(\Rd)}\;
\int_\Rd\frac{\Lg_0 f}{f}\,d\nu\,,
\end{equation*}
then
\begin{equation*}
\lamstr(\Lg)\,=\,\sup_{\nu \in \cP(\Rd)}\left(\int_{\Rd} c(x)\,\nu(dx)- I(\nu)\right)\,.
\end{equation*}
\end{remark}

\begin{assumption}\label{A2.1}
The following hold.
\begin{itemize}
\item[(i)] There exists an inf-compact function $\ell\in\Cc(\Rd)$, and a positive
function $\Lyap\in\Sobl^{2, d}(\Rd)$, satisfying $\inf_{\Rd}\Lyap>0$, such that
\begin{equation*}
\sup_{u\in\Act}\,\cL_u \Lyap \,\le\,
\kappa_1 \Ind_{\cK}-\ell \Lyap \quad \text{in\ } \Rd\,,
\end{equation*}
for some constant $\kappa_1$ and a compact set $\cK$.
\item[(ii)]
The function $x\mapsto \beta\ell(x)-\max_{u\in\Act} c(x,u)$ is inf-compact
for some $\beta\in(0, 1)$.
\end{itemize}
\end{assumption}

As noted in \cite{ABS}, \cref{A2.1} does not hold for diffusions with bounded $a$,
and $b$. Therefore, to treat this case,
we consider an alternate set of conditions.

\begin{assumption}\label{A2.2}
The following hold.
\begin{itemize}
\item[(i)]
There exists a positive function $\Lyap\in\Sobl^{2,d}(\Rd)$,
satisfying $\inf_{\Rd} \Lyap > 0$, and a constant $\gamma>0$ such that
\begin{equation}\label{EA2.2A}
\sup_{u\in\Act}\,\cL_u \Lyap \,\le\, \kappa_1 \Ind_{\cK}-\gamma \Lyap
\quad \text{in\ } \Rd\,,
\end{equation}
for some constant $\kappa_1$ and a compact set $\cK$.
\item[(ii)]
$\norm{c^-}_\infty+\limsup_{\abs{x}\to\infty}\, \max_{u\in\Act}\,c(x,u)
<\gamma$.
\end{itemize}
\end{assumption}

The eigenvalue $\lamstr(\cG)$ in \cref{E-lamstr}
represents the optimal risk-sensitive ergodic cost
\cite{ari-anup,ABS,biswas-11a,biswas-11}.
In order to define this control problem, we need to introduce some additional notation.
For an admissible control $U$, the risk-sensitive criterion is defined as
\begin{equation*}
\sE(U) \,=\, \inf_{x\in\Rd}\; \limsup_{T\to\infty}\,\frac{1}{T}\,
\log\Exp^U_x\Bigl[\E^{\int_0^T c(X_s, U_s)\, \D{s}}\Bigr]\,.
\end{equation*}
The optimal value is defined as $\Lambda^*=\inf_{U\in\Uadm} \sE(U)$.

\begin{notation}
For a continuous function $g\,\colon\,\Rd\to(0,\infty)$
which is bounded below away from $0$,
we let $\order(g)$ denote the space of continuous functions
$f\colon\Rd\to\RR$ satisfying
$\sup_{x\in\Rd}\,\frac{\abs{f(x)}}{g(x)}<\infty$, and
by $\sorder(g)$ is subset consisting of those functions which satisfy
$\limsup_{R\to\infty}\,\sup_{x\in B_{R}^{c}}\,\frac{\abs{f(x)}}{g(x)}=0$.
\end{notation}

We borrow the following result from \cite{ABS}.

\begin{theorem}[{\cite[Theorem~4.1]{ABS}}]\label{T2.3}
Suppose that either \cref{A2.1}, or {\upshape\ref{A2.2}} holds.
Then $\Lambda^*=\lamstr(\cG)$, and for some
function $\Phi^*\in\Cc^{2,+}(\Rd)\cap\order(\Lyap^\beta)$,
for some $\beta\in (0, 1)$, we have
\begin{equation}\label{ET2.3A}
\cG\Phi^*(x)\,=\,\min_{u\in\Act}\,\bigl[\cL_u\Phi^*(x) + c(x,u)\Phi^*(x)\bigr]
\,=\,\lamstr(\cG)\Phi^*(x)\quad \text{in}\ \Rd\,.
\end{equation}
In addition, we have the following
\begin{itemize}
\item[\textup{(}i\textup{)}]
any measurable selector $v_*\colon\Rd\to\Act$
from the minimizer of \cref{ET2.3A} is an optimal Markov control
with respect to the risk-sensitive criterion;
\item[\textup{(}ii\textup{)}]
the function $\Phi^*$ has the stochastic representation
\begin{equation}\label{ET2.3B}
\Phi^*(x) \,=\, \Exp_x^{v_*}
\Bigl[\E^{\int_0^{\uuptau_r}(c(X_s,v_*(X_s))-\lamstr(\cG))\,\D{s}}
\Phi^*(X_{\uuptau_r})\Bigr]\quad \forall\,x\in B^c_r\,,
\end{equation}
for any $r>0$.
\item[\textup{(}iii\textup{)}]
$\Phi^*$ is the unique (up to a multiplicative constant) positive
solution of \cref{ET2.3A} in $\Cc^{2}(\Rd)$.
\end{itemize}
\end{theorem}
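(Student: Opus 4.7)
My plan is to construct $\Phi^*$ as a limit of Dirichlet principal eigenfunctions on expanding balls, establish the growth bound $\Phi^*\in\order(\Lyap^\beta)$ by a Foster--Lyapunov comparison argument, and then deduce the stochastic representation, optimality, and uniqueness from It\^o's formula together with Lyapunov-based uniform integrability.

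\textbf{Construction and growth bound.} For each $n\in\NN$, I would invoke \cite[Theorem~1.1]{Quaas-08a} to obtain a Dirichlet eigenpair $(\lambda_n,\Phi_n)$ of $\cG$ on $B_n$ with $\Phi_n(0)=1$; by \cref{L2.1}, $\lambda_n\nearrow\lamstr(\cG)$. Harnack's inequality and local $W^{2,p}$ estimates yield uniform local bounds on $\{\Phi_n\}$, so a diagonal subsequence converges in $\Cc^{2,\alpha}_{\mathrm{loc}}(\Rd)$ to a positive $\Phi^*\in\Cc^2(\Rd)$ solving \cref{ET2.3A}. The key analytic step is the global bound $\Phi^*\in\order(\Lyap^\beta)$. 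Concavity of $t\mapsto t^\beta$ for $\beta\in(0,1)$ together with \cref{A2.1}(i) gives
\begin{equation*}
\cL_u\Lyap^\beta \,\le\, \beta\Lyap^{\beta-1}\cL_u\Lyap
\,\le\, \beta\kappa_1\Lyap^{\beta-1}\Ind_\cK - \beta\ell\Lyap^\beta\,,
\end{equation*}
and adding $c(x,u)\Lyap^\beta$ and invoking the inf-compactness of $\beta\ell-\max_u c$ in \cref{A2.1}(ii) produces a compact $K$ and $\theta>0$ with $\cG\Lyap^\beta\le(\lamstr(\cG)-\theta)\Lyap^\beta$ on $K^c$. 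Under \cref{A2.2} the same conclusion follows after choosing $\beta\in(0,1)$ so that $\beta\gamma>\norm{c^-}_\infty+\limsup_{\abs{x}\to\infty}\max_u c(x,u)$. A comparison on $B_n\setminus K$ between $\Phi_n$ (a subsolution of $\cG-\lamstr(\cG)$) and $M\Lyap^\beta$ (a strict supersolution), with $M$ fixed via the Harnack-based local bound to dominate $\Phi_n$ on $\partial K$ uniformly in $n$, delivers $\Phi_n\le M\Lyap^\beta$ on $B_n\setminus K$, and passing $n\to\infty$ yields $\Phi^*\in\order(\Lyap^\beta)$.

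\textbf{Representation, optimality, and uniqueness.} Fix a measurable minimizing selector $v_*$ of \cref{ET2.3A}. Applying It\^o's formula to
\begin{equation*}
M_t \,\df\, \E^{\int_0^{t\wedge\uuptau_r}(c(X_s,v_*(X_s))-\lamstr(\cG))\,\D s}\Phi^*(X_{t\wedge\uuptau_r})
\end{equation*}
under $v_*$ makes $\{M_t\}$ a nonnegative local martingale; the Foster--Lyapunov estimate in \cref{A2.1}(i) or \cref{EA2.2A}, combined with $\Phi^*\le M\Lyap^\beta$, supplies the uniform integrability needed to let $t\to\infty$, yielding \cref{ET2.3B}. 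For any $U\in\Uadm$, the semilinear inequality $\cG\Phi^*\le\cL_u\Phi^*+c(\cdot,u)\Phi^*$ evaluated at $u=U_t$ combined with the same It\^o computation produces
\begin{equation*}
\Phi^*(x) \,\le\, \Exp_x^U\Bigl[\E^{\int_0^{T}(c(X_s,U_s)-\lamstr(\cG))\,\D s}\Phi^*(X_T)\Bigr]\,,
\end{equation*}
with equality under $v_*$. Taking logarithms, dividing by $T$, and combining $\inf_\Rd\Phi^*>0$ (from a Harnack/stochastic-representation argument as in \cite[Lemma~2.1]{ari-anup}) with the Lyapunov moment bound on $\Exp_x^U[\Lyap^\beta(X_T)]$ gives $\sE(U)\ge\lamstr(\cG)$ with equality at $v_*$, establishing (i) and $\Lambda^*=\lamstr(\cG)$. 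For (iii), a second positive $\Cc^2$ solution $\tilde\Phi$ satisfies $\cL_{v_*}\tilde\Phi+c(\cdot,v_*)\tilde\Phi\ge\lamstr(\cG)\tilde\Phi$; setting $w\df\tilde\Phi/\Phi^*$, a direct computation shows that $w$ is a subsolution of a linear elliptic operator without zeroth-order term whose drift is $b(\cdot,v_*)+2a\nabla\log\Phi^*$. Recurrence of the $\Phi^*$-transformed diffusion under $v_*$, inherited from the Foster--Lyapunov bound, together with the strong maximum principle then forces $w$ to be constant.

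\textbf{Main obstacle.} The crux is the growth bound $\Phi^*\in\order(\Lyap^\beta)$: the correct exponent $\beta\in(0,1)$ must simultaneously reconcile concavity of $t\mapsto t^\beta$ with \cref{A2.1}(ii) or \cref{A2.2}(ii), and the comparison to $M\Lyap^\beta$ must be maintained uniformly along the approximating sequence $\{\Phi_n\}$ despite the normalization at $0$. Once this bound is in place, the Foster--Lyapunov estimates deliver the uniform integrability driving the It\^o representation, after which optimality and uniqueness follow by essentially standard arguments.
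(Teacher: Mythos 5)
You should note first that the paper does not actually reprove this theorem: it is imported wholesale from \cite[Theorems~4.1--4.3]{ABS}, and the only argument supplied in-house is the growth bound $\Phi^*\in\order(\Lyap^\beta)$, which is obtained probabilistically: the representation \cref{ET2.3B} is taken from \cite{ABS}, and then H\"older's inequality with exponent $\nicefrac{1}{\beta}$ together with the Lyapunov estimate $\Exp_x^{v}\bigl[\E^{\gamma\uuptau_r}\Lyap(X_{\uuptau_r})\bigr]\le\Lyap(x)$ (from \cref{EA2.2A}, with $\cK\subset B_r$ and $r$ chosen so that $\max_u c\le\beta\gamma$ off $B_r$) yields $\Phi^*\le C\Lyap^\beta$. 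Your route to the same bound --- concavity of $t\mapsto t^\beta$ making $\Lyap^\beta$ a strict supersolution of $\cG-(\lamstr(\cG)-\theta)$ outside a compact set, followed by comparison with $\Phi_n$ on $B_n\setminus K$ --- is a genuinely different, purely analytic alternative, but the comparison step is asserted rather than proved: on $B_n\setminus K$ the zeroth-order coefficient $c-\lambda_n$ has no sign (under \cref{A2.1} the potential $c$ may even be unbounded above), so the classical comparison principle does not apply. It can be repaired by linearizing at a minimizing selector for $\Lyap^\beta$ and dividing by $\Lyap^\beta$ itself (a refined maximum principle argument exploiting the positive strict supersolution), but that is precisely the step that needs an argument.

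The genuine gap is in the verification that $\Lambda^*=\lamstr(\cG)$ and that $v_*$ is optimal. From $\Phi^*(x)\le\Exp_x^U\bigl[\E^{\int_0^{T}(c(X_s,U_s)-\lamstr(\cG))\,\D s}\Phi^*(X_T)\bigr]$ you cannot simply ``take logarithms and divide by $T$'': $\Phi^*$ is in general unbounded above, and decoupling $\Phi^*(X_T)$ from $\E^{\int_0^T c\,\D s}$ by H\"older against a bound on $\Exp_x^U\bigl[\E^{\int_0^T c\,\D s}\Lyap(X_T)\bigr]$ degrades the exponent and produces a lower bound strictly smaller than $\lamstr(\cG)$, while a moment bound on $\Exp_x^U[\Lyap^\beta(X_T)]$ alone does not separate the two factors inside a single expectation. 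The standard repair (and the route in \cite{ABS,ari-anup}) is to run the submartingale inequality with the bounded Dirichlet eigenfunctions $\Psi_n$, which vanish on $\partial B_n$, to get $\sE(U)\ge\lambda_n$ for every $U$, and let $n\to\infty$. Likewise, your proof of $\sE(v_*)\le\lamstr(\cG)$ rests on $\inf_{\Rd}\Phi^*>0$, which is not available under \cref{A2.1} or \cref{A2.2}: when $\limsup_{\abs{x}\to\infty}\max_u c<\lamstr(\cG)$ the eigenfunction typically decays at infinity (the paper invokes \cite[Lemma~2.1]{ari-anup} only in a different regime), and the claimed equality at deterministic times $T$ under $v_*$ also requires a martingale, not merely local-martingale, property. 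The construction of $\Phi^*$ by Dirichlet approximation, the representation \cref{ET2.3B} via the localization argument of \cref{L3.1}, and the uniqueness via the ground-state transform and recurrence of the twisted process are sound in outline and consistent with \cite{ABS}.
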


\begin{proof}
For the proof of this and related statements
 we refer to \cite[Theorems~4.1--4.3]{ABS}.
We provide a short proof of the fact that $\Phi^*\in\order(\Lyap^\beta)$
for the convenience of the reader.
We consider \cref{A2.2}.
Choose $r$ large enough so that for some suitable $\beta\in(0, 1)$
we have $\max_{u\in\Act} c(x,u)\le \beta \gamma$ for $x\in B_r^c$.
Without loss of generality we may assume $\cK\subset B_r$.
From the proof of \cite[Theorem~4.2]{ABS} it follows that for $x\in B^c_r$
we have
\begin{equation*}
\Phi^*(x) \,=\, \Exp_x^{v_*}
\Bigl[\E^{\int_0^{\uuptau_r}(c(X_s,v(X_s))-\lamstr(\cG))\,\D{s}}\,
\Phi^*(X_{\uuptau_r})\Bigr]\,,
\end{equation*}
which in turn, gives (since $\lamstr(\cG)=\Lambda^*\ge 0$)
\begin{equation*}
\Phi^*(x)\,\le\, \Exp_x^v\Bigl[\E^{\beta\gamma\uuptau_r} \Phi^*(X_{\uuptau_r})\Bigr]
\,\le\, \Exp_x^v\Bigl[\E^{\gamma\uuptau_r}
\bigl(\Phi^*(X_{\uuptau_r})\bigr)^{\nicefrac{1}{\beta}}\Bigr]^{\beta}
\,\le\,
\biggl[\max_{\partial B_r}\, \frac{\Phi^*}{\Lyap^\beta}\biggr]
\bigl(\Lyap(x)\bigr)^\beta\,,
\end{equation*}
where in the last inequality we use \cref{EA2.2A}.
The  proof under \cref{A2.1} is exactly analogous.
\end{proof}

We next state the Collatz--Wielandt formula for $\lamstr(\cG)$.

\begin{theorem}\label{T2.4}
Grant either \cref{A2.1}, or~{\upshape\ref{A2.2}}.
Then
\begin{align}
\lamstr(\cG) &\,=\, \sup_{\psi\in \Cc^{2,+}(\Rd)\cap\sorder(\Lyap)}\;
\inf_{\mu\in\cP(\Rd)}\; \int_{\Rd}\frac{\cG\psi}{\psi}\, \D{\mu}\label{ET2.4A}\\
&\,=\, \inf_{\psi\in \Cc^{2,+}(\Rd)}\;
\sup_{\mu\in\cP(\Rd)}\; \int_{\Rd}\frac{\cG\psi}{\psi}\, \D{\mu}\,.\label{ET2.4B}
\end{align}
\end{theorem}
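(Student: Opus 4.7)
The theorem splits into the two equalities, which I handle separately. Equality \cref{ET2.4B} follows quickly from the definition of $\lamstr(\cG)$: taking $\psi=\Phi^*$ from \cref{T2.3} gives $\cG\Phi^*/\Phi^*\equiv\lamstr(\cG)$, so $\sup_{\mu}\int\cG\Phi^*/\Phi^*\,\D\mu=\lamstr(\cG)$ and hence $\inf_\psi\sup_\mu\le\lamstr(\cG)$; conversely, any $\psi\in\Cc^{2,+}(\Rd)$ with $\sup_x\cG\psi(x)/\psi(x)=\lambda<\lamstr(\cG)$ would be a positive supersolution of $\cG\phi-\lambda\phi\le 0$ with $\lambda<\lamstr(\cG)$, contradicting \cref{E-lamstr}. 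For the direction $\ge\lamstr(\cG)$ in \cref{ET2.4A}, I again use $\psi=\Phi^*$, whose admissibility reduces to verifying $\Phi^*\in\sorder(\Lyap)$. Since $\Phi^*\in\order(\Lyap^\beta)$ for some $\beta\in(0,1)$ by \cref{T2.3}, and since under either \cref{A2.1} or \cref{A2.2} the Foster--Lyapunov inequality combined with $\inf_{\Rd}\Lyap>0$ forces $\Lyap$ to be inf-compact in the relevant geometrically ergodic regime, we have $\Phi^*/\Lyap\le C\Lyap^{\beta-1}\to 0$ at infinity, and the conclusion follows from $\cG\Phi^*/\Phi^*\equiv\lamstr(\cG)$.

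The substantive step is the direction $\le\lamstr(\cG)$ in \cref{ET2.4A}. I argue by contradiction: suppose $\psi\in\Cc^{2,+}(\Rd)\cap\sorder(\Lyap)$ satisfies $\cG\psi\ge\lambda\psi$ on $\Rd$ for some $\lambda>\lamstr(\cG)$. With $v_*$ the optimal Markov selector from \cref{T2.3}, the pointwise inequality $\cL_{v_*}\psi+c(\cdot,v_*)\psi\ge\cG\psi\ge\lambda\psi$ and It\^o's formula imply that
\begin{equation*}
Y_t \,\df\, \exp\Bigl(\int_0^t\bigl(c(X_s,v_*(X_s))-\lambda\bigr)\,\D s\Bigr)\,\psi(X_t)
\end{equation*}
is a nonnegative local submartingale under $\Prob_x^{v_*}$. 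Localizing by $\uptau_n$ and letting $n\to\infty$,
\begin{equation*}
\psi(x) \,\le\, e^{-(\lambda-\lamstr(\cG))T}\,
\Exp_x^{v_*}\Bigl[e^{\int_0^T(c(X_s,v_*(X_s))-\lamstr(\cG))\,\D s}\,\psi(X_T)\Bigr]\qquad\forall\,T>0.
\end{equation*}
Since $\psi\in\sorder(\Lyap)$ yields $\psi\le C_\varepsilon+\varepsilon\Lyap$ for each $\varepsilon>0$, bounding the right-hand side reduces to the uniform-in-$T$ estimate $\sup_T\Exp_x^{v_*}\bigl[e^{\int_0^T(c-\lamstr(\cG))\,\D s}\Lyap(X_T)\bigr]<\infty$. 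Applying It\^o's formula to $e^{\int(c-\lamstr(\cG))\,\D s}\Lyap(X_t)$ and invoking \cref{EA2.2A}, the effective drift coefficient $(c-\lamstr(\cG)-\gamma)$ is $\le-\eta<0$ outside a compact set, by \cref{A2.2}(ii) combined with the elementary bound $\lamstr(\cG)\ge-\norm{c^-}_\infty$; a standard Dynkin--Gronwall argument then produces the required uniform bound. The case under \cref{A2.1} is analogous with $\ell$ playing the role of $\gamma$ and its inf-compactness providing the eventual negativity. Consequently the right-hand side vanishes as $T\to\infty$, forcing $\psi(x)\le 0$, a contradiction.

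The main obstacle is the uniform-in-$T$ exponential-moment bound just described. The natural normalizer is the true $\Prob_x^{v_*}$-martingale $e^{\int(c-\lamstr(\cG))\,\D s}\Phi^*(X_t)$, arising from $\cL_{v_*}\Phi^*+c\Phi^*=\lamstr(\cG)\Phi^*$, but transferring the resulting estimate from $\Phi^*$ to $\Lyap$ requires careful Dynkin-type manipulations combined with the comparison $\Phi^*\le C\Lyap^\beta$; additional care is needed when $\lamstr(\cG)<0$ to ensure that the effective drift coefficient in the Foster--Lyapunov estimate remains eventually negative.
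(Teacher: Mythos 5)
Your handling of \cref{ET2.4B}, and of the inequality $\lamstr(\cG)\le\sup_\psi\inf_\mu$ in \cref{ET2.4A} via the admissibility of $\Phi^*$, coincides with the paper's (the only soft spot, $\order(\Lyap^\beta)\subset\sorder(\Lyap)$ requiring $\Lyap(x)\to\infty$, is passed over just as quickly in the paper, so you are on equal footing there). For the substantive direction you take a genuinely different route: the paper represents both $\psi$ and $\Phi^*$ stochastically up to the hitting time $\uuptau_r$ of a fixed ball (\cref{L3.1}), deduces $\psi\le\kappa\Phi^*$ with an interior touching point, and concludes with the strong maximum principle, contradicting $\lambda>\lamstr(\cG)$; you instead run a fixed-horizon submartingale argument with the discount factor $\E^{-(\lambda-\lamstr(\cG))T}$ and force $\psi\le0$ directly. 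What your route buys is that it avoids the ground-state comparison and the maximum principle altogether; what it costs is a global-in-time moment estimate that the paper never needs, since in \cref{L3.1} the exponential functional only has to be controlled on the event of staying outside $B_r$, where $c-\lambda$ is dominated by $\ell$ (resp.\ $\gamma$), and no uniform-in-$T$ bound enters.

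Two steps in your sketch are asserted rather than proved, and both are where the real work sits. First, ``localizing by $\uptau_n$ and letting $n\to\infty$'' requires the boundary term $\Exp^{v_*}_x\bigl[\E^{\int_0^{\uptau_n}(c-\lambda)\,\D{s}}\,\psi(X_{\uptau_n})\,\Ind_{\{\uptau_n\le T\}}\bigr]$ to vanish; this is exactly where $\psi\in\sorder(\Lyap)$ must be used, together with a bound, uniform in $n$ for fixed $T$, on $\Exp^{v_*}_x\bigl[\E^{\int_0^{\uptau_n\wedge T}(c-\lamstr(\cG))\,\D{s}}\,\Lyap(X_{\uptau_n\wedge T})\bigr]$ --- the analogue of the estimate \cref{PL3.1E}. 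Second, your ``standard Dynkin--Gronwall'' bound for $\Exp^{v_*}_x\bigl[\E^{\int_0^{T}(c-\lamstr(\cG))\,\D{s}}\,\Lyap(X_{T})\bigr]$ is not quite standard: after \cref{EA2.2A} and the (correct) observation $\lamstr(\cG)\ge-\norm{c^-}_\infty$, which indeed gives $c-\lamstr(\cG)-\gamma\le-\eta$ outside a compact set $K$, the Gronwall source term is $\Exp^{v_*}_x\bigl[\E^{\int_0^{s}(c-\lamstr(\cG))\,\D{r}}\,\Ind_{K}(X_{s})\bigr]$, and controlling it uses the supermartingale $\E^{\int_0^t(c-\lamstr(\cG))\,\D{s}}\Phi^*(X_t)$ together with $\inf_K\Phi^*>0$; the comparison you invoke, $\Phi^*\le C\Lyap^\beta$, points the wrong way for this purpose. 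With these two estimates supplied --- and they do follow from exactly the ingredients you name --- your argument closes; in fact a bound growing linearly in $T$ already beats the factor $\E^{-(\lambda-\lamstr(\cG))T}$, so the full uniform-in-$T$ bound is not even necessary. As written, however, the central estimates are a plan rather than a proof, whereas the paper's \cref{L3.1} does the corresponding work in full.
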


\begin{remark}
The class of test functions $\psi$ in the representation \cref{ET2.4A} cannot,
in general, be enlarged to $\Cc^{2,+}(\Rd)$.
For a linear operator $\Lg$, it is known from
\cite[Theorem~1.4]{Berestycki-15} (see also \cref{T2.8} below)
that for any $\lambda\ge \lamstr(\Lg)$, there exists 
$\Psi\in\Cc^{2,+}(\Rd)$
satisfying 
\begin{equation*}
\Lg\Psi\,=\,\lambda\Psi\,.
\end{equation*}
Thus we obtain 
\begin{equation*}
\sup_{\psi\in\Cc^{2,+}(\Rd)}\; \inf_{\mu\in\cP(\Rd)}\;
\int_{\Rd}\frac{\Lg\psi}{\psi}\, \D{\mu}\,=\,\infty\,.
\end{equation*}
\end{remark}

The proof of \cref{T2.4} gives us the following maximum principle for the
semilinear operator $\cG$ in $\Rd$. This should be compared with
\cite[Theorem~1.6]{Berestycki-15}.

\begin{theorem}\label{T2.5}
Let either \cref{A2.1} or~{\upshape\ref{A2.2}} holds.
Let $\varphi\in\Cc^2(\Rd)\cap\sorder(\Lyap)$ satisfy
$\cG\varphi-\lamstr(\cG)\varphi\ge 0$ in $\Rd$,
and $\varphi(x_0)>0$ for some $x_0\in\Rd$.
Then  $\varphi=\kappa\Phi^*$ for some $\kappa>0$.
\end{theorem}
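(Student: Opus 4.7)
The plan is to linearize along an optimal selector from \cref{T2.3} and then carry out a stochastic comparison of $\varphi$ with the principal eigenfunction $\Phi^*$, finishing by the strong maximum principle applied to the ratio. Let $v_*$ be a measurable selector from the minimizer of \eqref{ET2.3A}, so that $\cL_{v_*}\Phi^*+c(\cdot,v_*)\Phi^*=\lamstr(\cG)\Phi^*$. Since $\cG\varphi\le\cL_{v_*}\varphi+c(\cdot,v_*)\varphi$ pointwise, the hypothesis $\cG\varphi\ge\lamstr(\cG)\varphi$ upgrades $\varphi$ to a subsolution of the \emph{linear} operator $\cL_{v_*}+c(\cdot,v_*)-\lamstr(\cG)$; equivalently, $\kappa\Phi^*-\varphi$ is a supersolution of this linear operator for every $\kappa\in\RR$.

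The first substantive step is a Feynman--Kac comparison. Applying It\^o's formula to $\kappa\Phi^*-\varphi$ under the Markov control $v_*$, stopping at $\uuptau_r\wedge\uptau_R\wedge t$ and letting $R,t\to\infty$, the linear supersolution property combined with the representation \eqref{ET2.3B} yields
\[
\kappa\Phi^*(x)-\varphi(x)\,\ge\,\Exp_x^{v_*}\Bigl[\E^{\int_0^{\uuptau_r}(c(X_s,v_*(X_s))-\lamstr(\cG))\,\D{s}}\,(\kappa\Phi^*-\varphi)(X_{\uuptau_r})\Bigr]\,,\quad x\in B_r^c\,.
\]
Choosing $\kappa\ge\max_{\partial B_r}(\varphi/\Phi^*)$ makes the right-hand side non-negative, so $\kappa\Phi^*\ge\varphi$ on $B_r^c$. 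Writing $\eta\df\varphi/\Phi^*$, this reads $\sup_{B_r^c}\eta\le\max_{\partial B_r}\eta$ for every $r>0$, whence $M\df\sup_{\Rd}\eta$ is finite, equal to $\sup_{\bar B_1}\eta$, and attained at an interior point of $\Rd$; the assumption $\varphi(x_0)>0$ forces $M>0$. Using the product-rule identity $\cL_u(fg)=f\,\cL_u g+g\,\cL_u f+2a^{ij}\partial_i f\,\partial_j g$ to rewrite the linear subsolution inequality for $\varphi=\eta\Phi^*$, one obtains
\[
a^{ij}\,\partial_{ij}\eta+\Bigl(b^{i}(\cdot,v_*)+2a^{ij}\,\frac{\partial_j\Phi^*}{\Phi^*}\Bigr)\partial_i\eta\,\ge\,0\quad\text{in }\Rd,
\]
a uniformly elliptic inequality with locally bounded coefficients and \emph{no} zeroth-order term. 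Since $\eta$ attains its positive maximum $M$ in the interior, the strong maximum principle forces $\eta\equiv M$, and thus $\varphi=M\Phi^*$ with $\kappa\df M>0$.

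The main technical obstacle I anticipate is justifying the limits $R,t\to\infty$ in the It\^o step, i.e., establishing dominated-convergence bounds for the functional $\E^{\int_0^{\uuptau_r\wedge\uptau_R\wedge t}(c-\lamstr(\cG))\,\D{s}}(\kappa\Phi^*-\varphi)(X_{\uuptau_r\wedge\uptau_R\wedge t})$. This is where the growth hypothesis $\varphi\in\sorder(\Lyap)$ enters in an essential way: combined with the exponential moments of $\uuptau_r$ furnished by \eqref{EA2.2A} (respectively by the inf-compactness of $\ell$ under \cref{A2.1}), and with the $\order(\Lyap^\beta)$ bound on $\Phi^*$ proved in \cref{T2.3}, one recovers the requisite integrability by the same Lyapunov/Jensen computation used there to bound $\Phi^*$. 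Once this verification is in place, the rest of the argument is the adaptation of the stochastic-comparison portion of the proof of \cref{T2.4} to the inequality $\cG\varphi\ge\lamstr(\cG)\varphi$.
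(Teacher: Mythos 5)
Your proof is correct and follows essentially the same path as the paper's: linearize along the optimal selector $v_*$, use the stochastic representation from \cref{L3.1} together with \cref{ET2.3B} to obtain the global comparison $\varphi\le\kappa\Phi^*$ with equality at an interior point, and finish with the strong maximum principle. The only cosmetic difference is that you pass to the ratio $\eta=\varphi/\Phi^*$ and apply Hopf's principle to a drift operator with no zeroth-order term (a Doob $h$-transform), whereas the paper works with the difference $\kappa\Phi^*-\varphi$ and the operator $\cL_{v_*}-\bigl(c_{v_*}-\lamstr(\cG)\bigr)^-$; also, your ``$\sup_{\bar B_1}\eta$'' should read $\sup_{\bar B_{r_\circ}}\eta$ with $r_\circ$ the threshold radius from \cref{L3.1}.
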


The following theorem could be seen as a refined maximum principle in $\Rd$.

\begin{theorem}\label{T2.6}
Let either \cref{A2.1} or~{\upshape\ref{A2.2}} holds.
Also suppose that $\lamstr(\cG)<0$.
Then for any $\varphi\in\Cc^2(\Rd)\cap\sorder(\Lyap)$
satisfying $\cG\varphi\,\ge\,0$ in $\Rd$
we have either $\varphi< 0$ or $\varphi=0$
in $\Rd$.
\end{theorem}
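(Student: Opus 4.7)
The plan is to show $\varphi\le 0$ everywhere via a touching argument applied to the ratio $w\df \varphi/\Phi^*$, where $\Phi^*$ is the ground state from \cref{T2.3}, and then to deduce the dichotomy via the strong maximum principle at an interior zero. Since $\lamstr(\cG)<0$ and $\Phi^*$ satisfies the stochastic representation \cref{ET2.3B}, one has $\inf_\Rd\Phi^*>0$ (as argued in the Remark following \cref{T2.2}), hence $w$ is well-defined and continuous.

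First I would control $w$ at infinity. Because $\cG\varphi\ge 0$, for every Markov control $v$ one has $\cL_v\varphi+c(\cdot,v)\varphi\ge\cG\varphi\ge 0$; taking $v=v_*$ from \cref{T2.3} and setting $L_t\df\exp\bigl\{\int_0^t c(X_s,v_*(X_s))\,\D{s}\bigr\}$, the process $L_t\varphi(X_t)$ is a local $P^{v_*}$-submartingale. Lyapunov bounds of the type employed in the proof of \cref{T2.3}, combined with $\varphi\in\sorder(\Lyap)$, supply the uniform integrability needed to obtain
\begin{equation*}
\varphi(x)\,\le\,\Exp_x^{v_*}\bigl[L_{\uuptau_r}\,\varphi(X_{\uuptau_r})\bigr]\,,\quad x\in B_r^c\,.
\end{equation*}
Combining with \cref{ET2.3B} and performing the $h$-transform by $\Phi^*$ (which is a probability measure precisely thanks to \cref{ET2.3B}) yields
\begin{equation*}
w(x)\,\le\,\widehat{\Exp}_x\bigl[\E^{\lamstr(\cG)\,\uuptau_r}\,w(X_{\uuptau_r})\bigr]\,,\quad x\in B_r^c\,.
\end{equation*}
Since $\lamstr(\cG)<0$ makes $\E^{\lamstr(\cG)\uuptau_r}\le 1$ and $w(X_{\uuptau_r})\le M_r\df\max_{\partial B_r}w$, we deduce $\sup_{B_r^c}w\le\max(M_r,0)$. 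Together with continuity of $w$ on the compact $\bar B_r$, this forces either $\mu^*\df\sup_\Rd w\le 0$ (in which case the first inequality is already established) or else $\mu^*>0$ is attained at some $x^*\in\bar B_r$.

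The core step is then a touching argument. Assuming $\mu^*>0$, put $\eta\df\mu^*\Phi^*-\varphi$, so $\eta\ge 0$ on $\Rd$ and $\eta(x^*)=0$. Positive homogeneity gives $\cG(\mu^*\Phi^*)=\mu^*\lamstr(\cG)\Phi^*$, and the superadditivity $\cG(f+g)\ge \cG f+\cG g$ (immediate from $\min_u(A_u+B_u)\ge\min_u A_u+\min_u B_u$) applied to $\mu^*\Phi^*=\varphi+\eta$ produces
\begin{equation*}
\cG\eta\,\le\,\mu^*\lamstr(\cG)\Phi^*-\cG\varphi\,\le\,\mu^*\lamstr(\cG)\Phi^*\,<\,0\quad\text{in }\Rd\,.
\end{equation*}
However at the interior minimum $x^*$ one has $\nabla\eta(x^*)=0$ and $\nabla^2\eta(x^*)$ positive semidefinite, so with $a$ positive definite,
\begin{equation*}
\cG\eta(x^*)\,=\,a^{ij}(x^*)\,\partial_{ij}\eta(x^*)\,\ge\,0\,,
\end{equation*}
contradicting the strict inequality above. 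Hence $\mu^*\le 0$ and $\varphi\le 0$ in $\Rd$.

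For the dichotomy, suppose $\varphi(x_0)=0$ for some $x_0$ while $\varphi\not\equiv 0$. Then $x_0$ is an interior maximum of $\varphi$, so $\nabla\varphi(x_0)=0$, $\nabla^2\varphi(x_0)\le 0$, and $\cG\varphi(x_0)=a^{ij}(x_0)\partial_{ij}\varphi(x_0)\le 0$; together with $\cG\varphi\ge 0$ this forces equality, and the strong maximum principle applied to the linear elliptic inequality $\cL_{v_*}\varphi+c(\cdot,v_*)\varphi\ge 0$ yields $\varphi\equiv 0$, contradicting our assumption. The main obstacle is in the first step, namely justifying the stochastic submartingale bound and the $h$-transform without any a priori bound on $w=\varphi/\Phi^*$; this is precisely where the Lyapunov hypotheses of \cref{A2.1,A2.2} and the growth estimate $\Phi^*\in\order(\Lyap^\beta)$ from \cref{T2.3} must be exploited carefully to supply the requisite uniform integrability.
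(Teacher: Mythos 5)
Your proposal is correct, and its skeleton matches the paper's up to the comparison step: like the paper, you first use the Lyapunov hypotheses (the content of \cref{L3.1}, applied with $\lambda=0$ since $\cL_{v_*}\varphi+c_{v_*}\varphi\ge\cG\varphi\ge0$) together with the ground-state representation \cref{ET2.3B} and $\lamstr(\cG)<0$ to force $\varphi\le \kappa\Phi^*$ globally with the ratio attaining its supremum at an interior point; your $h$-transform phrasing is just a repackaging of the paper's direct comparison of the two kernels, and it inherits the same mild imprecision the paper has (namely, \cref{L3.1} is stated for positive $\psi$ but is applied to a sign-changing $\varphi$, which requires replacing the monotone-convergence step by domination via \cref{ET2.3B}). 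Where you genuinely diverge is in closing the contradiction: the paper sets $\xi=\varphi/(\kappa\Phi^*)$, derives a linear inequality for $\xi$ with zeroth-order coefficient $\lamstr(\cG)$, and invokes the strong maximum principle to get $\varphi=\kappa\Phi^*$, which contradicts $\cG\varphi\ge0$; you instead use positive homogeneity and superadditivity of $\cG$ (i.e.\ its concavity) to get the \emph{strict} inequality $\cG\eta\le\mu^*\lamstr(\cG)\Phi^*<0$ for $\eta=\mu^*\Phi^*-\varphi\ge0$, and contradict it pointwise at the interior minimum $x^*$ via $\grad\eta(x^*)=0$, $\grad^2\eta(x^*)\ge0$, $a>0$. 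This is more elementary (no strong maximum principle at that step), and it works precisely because $\lamstr(\cG)<0$ supplies strictness — note it would not carry over to \cref{T2.5}, where the analogous inequality is not strict. Your final dichotomy step coincides with the paper's (a second application of the strong maximum principle); just note that since $c_{v_*}$ has no sign you should make explicit the standard reduction $\cL_{v_*}\varphi-c_{v_*}^-\varphi\ge-c_{v_*}^+\varphi\ge0$, valid because you have already shown $\varphi\le0$.
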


The next result provides another characterization of $\lamstr(\cG)$.

\begin{theorem}\label{T2.7}
Under either \cref{A2.1} or~{\upshape\ref{A2.2}}, we have
\begin{align*}
\lamstr(\cG) &\,=\,\lambda''(\cG)
\,=\,\inf\,\Bigl\{\lambda\in\RR\,\colon \exists \psi\in\Cc^2(\Rd),\;
\inf_{\Rd}\psi>0,\; \\
&\mspace{250mu} \text{satisfying}\
\cG\psi-\lambda\psi\le 0 \text{\ a.e.\ in\ } \Rd\Bigr\}\,.
\end{align*}
\end{theorem}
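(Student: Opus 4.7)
The plan is to establish $\lamstr(\cG) \le \lambda''(\cG)$ and $\lambda''(\cG) \le \lamstr(\cG)$ separately. The first is immediate from the definitions, since every $\psi \in \Cc^2(\Rd)$ admissible for $\lambda''(\cG)$ (i.e., with $\inf_\Rd \psi > 0$) in particular satisfies $\psi > 0$ and is thus admissible for $\lamstr(\cG)$; the infimum over the smaller class is therefore at least the infimum over the larger.

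For the reverse inequality, fix $\delta > 0$; it suffices to exhibit $\psi \in \Cc^2(\Rd)$ with $\inf_\Rd \psi > 0$ satisfying $\cG\psi \le (\lamstr(\cG) + \delta)\psi$ in $\Rd$. My candidate is $\psi_\epsilon \df \Phi^* + \epsilon \Lyap^\alpha$, where $\Phi^*$ is the eigenfunction from \cref{T2.3}, $\Lyap$ is the Lyapunov function of \cref{A2.1,A2.2}, and $\alpha \in (0,1)$, $\epsilon > 0$ are to be chosen. Since $\inf_\Rd \Lyap > 0$ we have $\inf_\Rd \psi_\epsilon > 0$ automatically (a standard mollification restores $\Cc^2$-regularity if $\Lyap \notin \Cc^2$). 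Setting $\cM_u \df \cL_u + c(\cdot,u)$ and letting $v_*$ be an optimal Markov selector from \cref{ET2.3A}, so that $\cM_{v_*}\Phi^* = \cG\Phi^* = \lamstr(\cG)\Phi^*$, the concavity of $\cG$ (``freeze the minimizer'') yields the pointwise bound
\begin{equation*}
\cG\psi_\epsilon(x) \,\le\, \cM_{v_*(x)}\psi_\epsilon(x) \,=\, \lamstr(\cG)\Phi^*(x) + \epsilon\, \cM_{v_*(x)}\Lyap^\alpha(x)\,.
\end{equation*}
A chain-rule calculation exploiting $\alpha(1-\alpha)\Lyap^{\alpha-2} a^{ij}\partial_i\Lyap\,\partial_j\Lyap \ge 0$ gives $\cM_{v_*}\Lyap^\alpha \le \alpha\Lyap^{\alpha-1}\cL_{v_*}\Lyap + c(x,v_*)\Lyap^\alpha$; applying \cref{EA2.2A} of \cref{A2.2} (the case of \cref{A2.1} being analogous with $\gamma$ replaced by the inf-compact $\ell$) bounds this above by $\alpha\kappa_1\Lyap^{\alpha-1}\Ind_\cK + (c(x,v_*)-\alpha\gamma)\Lyap^\alpha$. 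Dividing by $\psi_\epsilon$,
\begin{equation*}
\frac{\cG\psi_\epsilon}{\psi_\epsilon}(x) \,\le\, \lamstr(\cG) + \frac{\epsilon\bigl[\alpha\kappa_1\Lyap^{\alpha-1}\Ind_\cK + (c(x,v_*)-\alpha\gamma-\lamstr(\cG))\Lyap^\alpha\bigr]}{\Phi^* + \epsilon\Lyap^\alpha}\,.
\end{equation*}

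The parameters are chosen as follows. Outside $\cK$, $\Ind_\cK = 0$ and $\epsilon\Lyap^\alpha/\psi_\epsilon \le 1$, so the correction is dominated by $(c(x,v_*(x)) - \alpha\gamma - \lamstr(\cG))^+$. The identification $\lamstr(\cG) = \Lambda^*$ from \cref{T2.3}, combined with Jensen's inequality applied in the definition of $\sE$, gives the a priori bound $\lamstr(\cG) \ge \inf c \ge -\|c^-\|_\infty$, while \cref{A2.2}(ii) yields $\limsup_{\abs{x}\to\infty}\max_u c(x,u) < \gamma - \|c^-\|_\infty \le \gamma + \lamstr(\cG)$. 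Hence $\alpha \in (0,1)$ can be taken close enough to $1$ that $c(x,v_*(x)) - \alpha\gamma - \lamstr(\cG) \le 0$ on $B_{R_0}^c$ for some $R_0$; under \cref{A2.1}, any $\alpha > \beta$ works, since then $c - \alpha\ell \to -\infty$ by the inf-compactness of $\beta\ell - \max_u c$. On the compact set $B_{R_0}$, $\Phi^*$ attains a positive minimum $m > 0$ and the numerator is bounded above by a constant $C$; taking $\epsilon \le \delta m/C$ makes the correction $\le \delta$ there as well. Thus $\cG\psi_\epsilon \le (\lamstr(\cG) + \delta)\psi_\epsilon$ on all of $\Rd$, which gives $\lambda''(\cG) \le \lamstr(\cG) + \delta$; letting $\delta \downarrow 0$ finishes the proof.

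The main obstacle is the uniform estimate outside $\cK$, specifically securing $\alpha \in (0,1)$ with $c(x,v_*(x)) - \alpha\gamma - \lamstr(\cG) \le 0$ at infinity. This rests on the strict inequality $\lamstr(\cG) + \gamma > \limsup_{\abs{x}\to\infty}\max_u c(x,u)$, which is available only through the a priori lower bound $\lamstr(\cG) \ge \inf c$ issued by the variational identification $\lamstr(\cG) = \Lambda^*$ in \cref{T2.3}, together with \cref{A2.2}(ii); a naive attempt at $\alpha = 1$ would forfeit the favorable sign of the discarded chain-rule term $-\alpha(1-\alpha)\Lyap^{\alpha-2} a^{ij}\partial_i\Lyap\,\partial_j\Lyap$ and destroy the estimate.
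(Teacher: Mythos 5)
Your overall strategy (the trivial inequality $\lamstr(\cG)\le\lambda''(\cG)$, plus exhibiting for each $\delta>0$ a function bounded away from zero with $\cG\psi\le(\lamstr(\cG)+\delta)\psi$) is a genuinely different route from the paper, and its analytic core is sound: freezing the minimizer $v_*$, the sign of the discarded chain-rule term for $\Lyap^\alpha$, the use of \cref{EA2.2A} (resp.\ \cref{A2.1}), and the observation that $\lamstr(\cG)=\Lambda^*\ge -\norm{c^-}_\infty$ so that \cref{A2.2}\,(ii) leaves room for the tail inequality, are all correct. The paper instead perturbs the potential: $c_m=c+\tfrac1m\ell$ under \cref{A2.1}, or the tail-lifting \cref{E-cm} under \cref{A2.2}, and invokes \cref{L3.2} to obtain eigenfunctions $\Psi^*_m\in\Cc^{2,+}(\Rd)$ with $\inf_{\Rd}\Psi^*_m>0$ and $\lamstr(\cG_m)\to\lamstr(\cG)$; since $c\le c_m$, these serve as admissible test functions. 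Your construction avoids \cref{L3.2} (and the reference it rests on), which is an attractive simplification in principle.

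The genuine gap is the regularity of your witness. \Cref{A2.1,A2.2} provide only $\Lyap\in\Sobl^{2,d}(\Rd)$, so $\psi_\epsilon=\Phi^*+\epsilon\Lyap^\alpha$ lies in $\Sobl^{2,d}(\Rd)$ but not, in general, in $\Cc^2(\Rd)$, whereas $\lambda''(\cG)$ in \cref{T2.7} is an infimum over $\Cc^2$ functions. The parenthetical ``standard mollification restores $\Cc^2$'' does not work here: for a variable-coefficient operator, $\cL_u(\Lyap*\rho_\delta)$ differs from $(\cL_u\Lyap)*\rho_\delta$ by a commutator involving second derivatives of $\Lyap$, which are controlled only in $L^d_{\mathrm{loc}}$; this error is small in $L^d_{\mathrm{loc}}$ but need not be small pointwise, so the mollified function need not satisfy the differential inequality a.e.\ (and the minimum over $u$ in $\cG$, whose minimizer varies with $x$, adds a further obstruction). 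Since $\Lyap$ satisfies only an inequality, elliptic regularity gives no upgrade either. This is exactly what the paper's detour buys: the $\Psi^*_m$ solve an eigenvalue \emph{equation}, hence are $\Cc^2$ by elliptic regularity and bounded below by construction. To close your argument you must either replace $\Lyap^\alpha$ by a genuinely smooth supersolution (which essentially leads back to constructions like \cref{L3.2}) or prove a $\Sobl^{2,d}$-version of the statement, which is weaker than the theorem as asserted. A minor aside: your closing remark that $\alpha=1$ would ruin the estimate is not accurate — with $\alpha=1$ the chain-rule term is simply absent and the tail inequality still holds under \cref{A2.2}\,(ii) (or \cref{A2.1}); this does not affect correctness.
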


We next, prove the existence of infinitely many generalized eigenvalues for the
semilinear operator $\cG$.
For the linear operator $\Lg$, it has been recently
shown in \cite[Theorem~1.4]{Berestycki-15} that for any $\lambda\geq \lamstr(\Lg)$
there exists a positive $\Psi\in\Sobl^{2, d}(\Rd)$ satisfying
$\Lg \Psi=\lambda\Psi$.
Our next result is in the same spirit but for the semilinear operator $\cG$.

\begin{theorem}\label{T2.8}
For any $\lambda\ge\lamstr(\cG)$ there exists a positive
$\Phi_\lambda\in\Cc^2(\Rd)$ 
satisfying
\begin{equation*}
\cG\Phi_\lambda\,=\,\lambda\Phi_\lambda\quad \text{in}\ \Rd.
\end{equation*}
\end{theorem}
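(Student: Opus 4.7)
The plan is to split into two cases. For $\lambda=\lamstr(\cG)$, the function $\Phi_\lambda\df\Phi^*$ produced by \cref{T2.3} already satisfies the desired equation. I therefore concentrate on the case $\lambda>\lamstr(\cG)$, and construct $\Phi_\lambda$ as a local-uniform limit of solutions of Dirichlet problems on an exhausting sequence of balls, with \emph{nonzero} boundary data.

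Fix $\lambda>\lamstr(\cG)$. By \cref{L2.1} the principal eigenvalues $\lambda_n\df\lambda_{B_n}(\cG)$ increase to $\lamstr(\cG)$, so the strict inequality $\lambda>\lambda_n$ holds for every $n$. The first step is to produce, for each sufficiently large $n$, a positive $\Psi_n\in\Cc^{2}(B_n)\cap\Cc(\bar B_n)$ solving
\begin{equation*}
\cG\Psi_n\,=\,\lambda\Psi_n \text{\ in\ } B_n\,,\qquad \Psi_n\,=\,1 \text{\ on\ } \partial B_n\,.
\end{equation*}
Because the principal eigenvalue of the shifted Bellman operator $\cG-\lambda$ on $B_n$ equals $\lambda_n-\lambda<0$, the refined maximum principle is available for $\cG-\lambda$ on $B_n$; by Perron's method (equivalently, by monotone iteration between an $\varepsilon$-multiple of the Dirichlet principal eigenfunction of $\cG$ on $B_n$, which serves as a strict subsolution after a translation, and a sufficiently large constant supersolution) a unique positive solution $\Psi_n$ exists. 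This is the semilinear counterpart of the Dirichlet theory for the linear operator $\Lg-\lambda$ of \cite{Berestycki-94}, and for Bellman-type operators is part of the framework developed in \cite{Quaas-08a}.

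Normalize $\Phi_n\df\Psi_n/\Psi_n(0)$, so $\Phi_n(0)=1$. Any measurable minimizing selector $v_n$ from the family in \cref{E-cG} reduces the equation for $\Phi_n$ to the \emph{linear} equation
\begin{equation*}
a^{ij}(x)\,\partial_{ij}\Phi_n + b^{i}(x,v_n(x))\,\partial_i\Phi_n + \bigl(c(x,v_n(x))-\lambda\bigr)\Phi_n \,=\,0
\end{equation*}
in $B_n$, with coefficients uniformly bounded on any fixed $B_R$. The Krylov--Safonov Harnack inequality then supplies, for every $R>0$, a constant $C_R>0$ such that $C_R^{-1}\leq\Phi_n\leq C_R$ on $B_R$ whenever $n\geq R+1$. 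Interior $W^{2,p}$ estimates and Sobolev embedding yield precompactness of $\{\Phi_n\}$ in $\Cc^{1,\alpha}_{\mathrm{loc}}(\Rd)$; extracting a subsequence $\Phi_{n_k}\to\Phi_\lambda$, stability of the Bellman nonlinearity under $\Cc^{1}$-convergence of the arguments gives $\cG\Phi_\lambda=\lambda\Phi_\lambda$ in $\Rd$, $\Phi_\lambda(0)=1$, and $\Phi_\lambda>0$ by Harnack. Standard elliptic bootstrapping promotes $\Phi_\lambda$ to $\Cc^{2}(\Rd)$.

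The main obstacle is really the first step: the solvability of the Dirichlet problem for the semilinear operator $\cG-\lambda$ on $B_n$ with \emph{strictly positive} boundary data, rather than the more familiar zero-boundary-value principal eigenvalue problem. The ingredient that makes this go through is the strict inequality $\lambda>\lambda_n$ coming from \cref{L2.1}, which places us precisely in the regime where the refined maximum principle for the Bellman operator is available and the Dirichlet problem is well-posed. Everything downstream---the Harnack bound, the $W^{2,p}$ compactness, and passage to the limit inside the minimum---is standard under \cref{A2.1,A2.2}.
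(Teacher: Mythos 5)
Your overall architecture (exhaust $\Rd$ by balls, normalize at the origin, use a measurable selector plus Krylov--Safonov Harnack and interior $W^{2,p}$ estimates, pass to the limit) is the same as the paper's, and that part of your argument is fine. The genuine gap is in your first step: the solvability of $\cG\Psi_n=\lambda\Psi_n$ in $B_n$ with boundary data $1$ is not established by what you wrote. Your proposed barriers do not satisfy the required inequalities: for the Dirichlet principal eigenfunction $\Psi$ of $\cG$ on $B_n$ one has $\cG(\varepsilon\Psi)-\lambda\varepsilon\Psi=\varepsilon(\lambda_n-\lambda)\Psi<0$, so it is a (super)solution of the shifted equation, not a subsolution, and ``translating'' it by a constant does not repair this since $\cG$ is nonlinear; likewise a large constant $M$ gives $\cG M-\lambda M=M\bigl(\min_{u}c(\cdot,u)-\lambda\bigr)$, which is positive wherever $\min_u c(x,u)>\lambda$ --- and $\lambda>\lamstr(\cG)$ does not prevent this, since $\lamstr(\cG)$ can lie well below $\sup_x\min_u c(x,u)$ (under \cref{A2.1} the potential may even be unbounded above). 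More fundamentally, nonzero boundary data reduces, after subtracting an extension of the boundary values, to a right-hand side of \emph{no} sign, and for Bellman operators the Quaas--Sirakov solvability/comparison theory for sign-changing data requires the second demi-eigenvalue condition as well, i.e., effectively $\lambda>\lambda_{B_n}(\cH)$; this can fail for all large $n$ when $\lamstr(\cG)<\lambda<\lamstr(\cH)$ (cf.\ \cref{T2.19}). So ``$\lambda>\lambda_n$ plus the refined maximum principle'' does not deliver the well-posedness you assert, and this is exactly the delicate point of the theorem.

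The paper sidesteps this by never posing a nonzero-boundary-value problem: for each $n$ it picks a nonzero $f_n\ge 0$ supported in the annulus $B_{n+1}\setminus B_n$ and solves $\cG\varphi_n-\lambda\varphi_n=-f_n$ in $B_{n+1}$ with $\varphi_n=0$ on $\partial B_{n+1}$. This is precisely the one-signed-forcing, zero-data case covered by \cite[Theorem~1.9]{Quaas-08a} once the shifted principal eigenvalue is negative (which $\lambda>\lambda_{n+1}$ guarantees), it yields $\varphi_n\ge0$, strict positivity follows from the stochastic representation via It\^o's formula, and --- crucially --- since $f_n\equiv0$ on $B_n$ the exact equation $\cG\varphi_n=\lambda\varphi_n$ holds there, so the Harnack/$W^{2,p}$/limit machinery you describe applies verbatim and produces the eigenfunction on $\Rd$. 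If you want to keep your boundary-data-$1$ route, you would have to prove solvability directly, e.g.\ by showing that the minimal gauge
$\Psi_n(x)=\inf_{v}\Exp^{v}_x\bigl[\E^{\int_0^{\uptau_n}(c(X_s,v(X_s))-\lambda)\,\D{s}}\bigr]$
is finite (using a minimizing selector for the eigenproblem on $B_n$, since $\lambda>\lambda_n$), positive, and a classical solution of the HJB equation with the stated boundary condition --- a nontrivial verification argument that your sketch does not supply; the paper's annulus-forcing device is the cleaner fix.
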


It is straightforward to show that we have an analogous version of all the
preceding results for $\cH$ (see \eqref{E-Lh}).
Let $\lamstr(\cH)$ be the corresponding principal eigenvalue
defined as in \cref{E-lamstr}.
It is clear from the definition that $\lamstr(\cG)\le \lamstr(\cH)$.
We present the following result.
A similar result is known for Dirichlet problems in bounded
domains \cite[Theorem~1.8]{Quaas-08a}.

\begin{theorem}\label{T2.19}
Let either \cref{A2.1} or~{\upshape\ref{A2.2}} hold, and
suppose $\lamstr(\cG)< \lamstr(\cH)$.
Then for any $\lambda\in(\lamstr(\cG), \lamstr(\cH))$,
there exists no non-trivial solution
of $\cG\varphi-\lambda\varphi=0$ for some $\varphi\in\Cc^2(\Rd)\cap\sorder(\Lyap)$.
\end{theorem}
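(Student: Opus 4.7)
The plan is to argue by contradiction. Suppose a nontrivial $\varphi\in\Cc^2(\Rd)\cap\sorder(\Lyap)$ satisfies $\cG\varphi=\lambda\varphi$ with $\lambda\in(\lamstr(\cG),\lamstr(\cH))$, and consider three cases according to the sign of $\varphi$.

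If $\varphi\ge 0$ on $\Rd$, then $\varphi(x_0)>0$ for some $x_0$ and $\cG\varphi-\lamstr(\cG)\varphi=(\lambda-\lamstr(\cG))\varphi\ge 0$ throughout $\Rd$; \cref{T2.5} then forces $\varphi=\kappa\Phi^*$ for some $\kappa>0$, and plugging back yields $\lambda=\lamstr(\cG)$, a contradiction. If $\varphi\le 0$ on $\Rd$, set $\psi\df-\varphi\ge 0$; the identity $\cH(-f)=-\cG f$ gives $\cH\psi=\lambda\psi$ and $\cH\psi-\lamstr(\cH)\psi=(\lambda-\lamstr(\cH))\psi\le 0$, so the analog of \cref{T2.5} for $\cH$ (valid by the remark preceding \cref{T2.19}) yields $\psi=\kappa'\Phi^*_\cH$, hence $\lambda=\lamstr(\cH)$, again a contradiction.

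The main obstacle is the remaining case where $\varphi$ changes sign. Here I plan a Doob $h$-transform by the ground state $\Phi^*$. Let $v_*$ be the optimal Markov selector from \cref{T2.3}(i), so that $\cL_{v_*}\Phi^*+c(\cdot,v_*)\Phi^*=\lamstr(\cG)\Phi^*$. Since $\cG$ is the pointwise minimum over $u\in\Act$,
\begin{equation*}
\cL_{v_*}\varphi+c(\cdot,v_*)\varphi\,\ge\,\cG\varphi\,=\,\lambda\varphi\quad\text{in }\Rd.
\end{equation*}
Writing $\eta\df\varphi/\Phi^*$ (well-defined since $\Phi^*>0$) and expanding, one obtains the subsolution inequality
\begin{equation*}
\tilde\cL\eta\,\ge\,\delta\eta,\qquad \delta\df\lambda-\lamstr(\cG)>0,
\end{equation*}
where $\tilde\cL\df a^{ij}\partial_{ij}+\bigl(b^i(\cdot,v_*)+2a^{ij}\partial_j\log\Phi^*\bigr)\partial_i$ is a linear elliptic operator with \emph{no} zero-order term. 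Once $\eta(x)\to 0$ as $\abs{x}\to\infty$ is established, $\sup_{\Rd}\eta$ is attained at some $x_*\in\Rd$; at this interior maximum $\nabla\eta(x_*)=0$ and the Hessian of $\eta$ is negative semidefinite, so $\tilde\cL\eta(x_*)\le 0$. Combined with $\tilde\cL\eta(x_*)\ge\delta\eta(x_*)$, this forces $\eta(x_*)\le 0$; hence $\eta\le 0$ on $\Rd$, i.e., $\varphi\le 0$, contradicting the sign change.

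The principal technical hurdle is verifying that $\eta=\varphi/\Phi^*$ vanishes at infinity in this last case. The upper bound on $\abs{\varphi}$ comes directly from $\varphi\in\sorder(\Lyap)$, but the matching lower bound on $\Phi^*$ has to be extracted from the stochastic representation \eqref{ET2.3B} combined with the Lyapunov inequalities in \cref{A2.1,A2.2}; this is the step where the ergodicity assumptions really come into play.
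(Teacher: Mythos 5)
Your Case~1 is fine, but the other two cases do not hold up as written. The decisive gap is in Case~3 (sign-changing $\varphi$): your argument hinges on showing $\eta=\varphi/\Phi^*\to0$ at infinity, and this cannot be extracted from the hypotheses. Membership $\varphi\in\sorder(\Lyap)$ only gives $\abs{\varphi}=\sorder(\Lyap)$, while the ground state satisfies merely $\Phi^*\in\order(\Lyap^\beta)$ with $\beta<1$; there is no lower bound on $\Phi^*$ of order $\Lyap$, and in typical situations (e.g.\ \cref{A2.2} with bounded $c$, say an Ornstein--Uhlenbeck generator with $c\equiv0$, where $\Phi^*$ is bounded and $\Lyap$ grows exponentially) the class $\sorder(\Lyap)$ contains unbounded $\varphi$, so $\varphi/\Phi^*$ need not decay at all. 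Consequently the interior maximum of $\eta$ need not exist and your maximum-principle step collapses. The correct way to control the behavior at infinity is not a pointwise comparison of $\varphi$ with $\Phi^*$ through decay of the ratio, but \cref{L3.1}: the condition $\varphi\in\sorder(\Lyap)$ is exactly what kills the boundary term on $\partial B_R$ in the It\^o/Dynkin estimate, yielding $\varphi(x)\le\Exp_x^{v_*}\bigl[\E^{\int_0^{\uuptau_r}(c_{v_*}-\lambda)\,\D{s}}\varphi(X_{\uuptau_r})\bigr]$; comparing this with the exact representation \cref{ET2.3B} of $\Phi^*$ (at the smaller rate $\lamstr(\cG)<\lambda$) gives $\varphi\le\kappa\Phi^*$ globally with a touching point, and the strong maximum principle then rules out $\varphi$ being positive anywhere. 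This is precisely how the paper argues (it reruns the proof of \cref{T2.5} with $\lambda$ in place of $\lamstr(\cG)$), and it treats your Cases~1 and~3 simultaneously without any trichotomy.

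Case~2 also misfires as stated: the analog of \cref{T2.5} for $\cH$ applies to functions with $\cH\psi-\lamstr(\cH)\psi\ge0$ that are positive somewhere, whereas your $\psi=-\varphi$ satisfies the \emph{reverse} inequality $\cH\psi-\lamstr(\cH)\psi\le0$, so the cited result does not apply. The fix is much simpler and is what the paper does: once $\varphi\le0$, the strong maximum principle (applied to $\cL_{\hat v}\varphi-(c_{\hat v}-\lambda)^-\varphi\ge0$ for a minimizing selector $\hat v$) forces either $\varphi\equiv0$ or $\varphi<0$ everywhere; in the latter case $-\varphi$ is a strictly positive $\Cc^2$ function with $\cH(-\varphi)=\lambda(-\varphi)$, which by the definition \cref{E-lamstr} of $\lamstr(\cH)$ gives $\lamstr(\cH)\le\lambda$, contradicting $\lambda<\lamstr(\cH)$. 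Note also that you must address the possibility that $\varphi\le0$ touches zero before invoking anything that requires strict positivity of $-\varphi$; your write-up skips this.
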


\section{Proofs}\label{S3}

We begin with the proof of \cref{T2.1}.
Let us point out that we use several results from \cite{Quaas-08a}
which deals with operators that are convex in
$(\grad^2\psi, \grad\psi, \psi)$.
Since $\cG$ is concave in $(\grad^2\psi, \grad\psi, \psi)$,
 we can apply the results of \cite{Quaas-08a} with suitable modification.

\begin{proof}[Proof of \cref{T2.1}]
Throughout the proof, we let $\lambda_D\equiv\lambda_D(\cG)$.
We claim that for any $\psi\in\Cc^{2,+}_0(D)$ we have
\begin{equation}\label{ET2.1D}
\inf_{D}\;\frac{\cG\psi}{\psi}\,\le\, \lambda_D\,.
\end{equation}
Arguing by contradiction, suppose that for some $\lambda>\lambda_D$
it holds that
\begin{equation*}
\inf_{D}\;\frac{\cG\psi}{\psi}\,=\,\lambda\,.
\end{equation*}
Thus we have 
\begin{equation*}
\cG \psi \,\ge\, \lambda \psi \quad \text{in}\ D\,,
\quad \psi>0\quad \text{in}\ D\,, \quad \psi=0 \quad \text{on}\; \partial D\,.
\end{equation*}
Then $\psi=t \Psi$ for some $t>0$ by \cite[Theorem~1.2]{Quaas-08a},
where $\Psi$ denotes the principal eigenvector in \cref{E-Psi}.
This implies that $\cG\Psi\ge \lambda \Psi$, thus
leading to a contradiction, and establishing the claim in \cref{ET2.1D}.

By \cref{ET2.1D}, we obtain
\begin{equation*}
\sup_{\psi\in\Cc^{2,+}(D)}\; \inf_{\mu\in\cP(D)}\;
\int_{D}\frac{\cG\psi}{\psi}\, \D{\mu}
\,=\, \sup_{\psi\in\Cc^{2,+}(D)}\; \inf_{D}\; \frac{\cG\psi}{\psi}
\,\le\, \lambda_D\,.
\end{equation*}
On the other hand, choosing $\Psi$ as a test function,
we have from \cref{E-Psi} that
\begin{equation*}
\sup_{\psi\in\Cc^{2,+}(D)}\; \inf_{\mu\in\cP(D)}\;
\int_{D}\frac{\cG\psi}{\psi}\,\D{\mu}
\,\ge\, \inf_{\mu\in\cP(D)}\; \int_{D}\frac{\cG\Psi}{\Psi}\, \D{\mu}
\,=\, \lambda_D\,.
\end{equation*}
This proves \cref{ET2.1A}.

Now we consider \cref{ET2.1B}.
We clearly have
\begin{equation*}
\inf_{\psi\in\Cc^{2,+}(D)}\; \sup_{\mu\in\cP(D)}\;
\int_{D}\frac{\cG\psi}{\psi}\,\D{\mu}
\,\le\, \sup_{\mu\in\cP(D)}\; \int_{D}\frac{\cG\Psi}{\Psi}\, \D{\mu}
\,=\, \lambda_D\,.
\end{equation*}
To get the opposite inequality, we apply the characterization in \cref{E-lamD}.
Note that it follows from \cref{E-lamD} that for any $\psi\in\Cc^{2,+}(D)$
we have
$\sup_D \frac{\cG\psi}{\psi}\ge \lambda_D$,
and hence,
\begin{equation*}
\inf_{\psi\in\Cc^{2,+}(D)}\; \sup_{\mu\in\cP(D)}\;
\int_{D}\frac{\cG\psi}{\psi}\, \D{\mu}
\,=\, \inf_{\psi\in\Cc^{2,+}(D)}\; \sup_{D}\;
\frac{\cG\psi}{\psi}\,\ge\, \lambda_D\,.
\end{equation*}
This establishes \cref{ET2.1B},
and completes the proof.
\end{proof}

\begin{remark}
\Cref{ET2.1A,ET2.1B} hold for a more general class of operators.
More precisely, if $\cG(\grad^2\psi, \grad\psi, \psi, x)$ is a general nonlinear 
elliptic operator that is concave in first three arguments  and satisfies
the assumptions $(H_0)$--$(H_2)$ in \cite{Quaas-08a}, we still have a
Collatz--Wielandt formula for the
eigenvalue $\lambda_D$. The proof follows from the arguments in
the proof of \cref{T2.1}.
In particular, if we consider the operator
\begin{equation*}
\cG(\grad^2\psi, \grad\psi, \psi, x) \,=\, a^{ij}(x)\,\partial_{ij} \psi(x)
+ b^{i}(x)\, \partial_{i} \psi(x) + c(x)\psi\,,
\end{equation*}
where $b$, and $c$ are bounded Borel measurable functions, and $a$ is continuous
and satisfies (A3), then
we have the Collatz--Wielandt representation in \cref{ET2.1A}-\cref{ET2.1B}
for its principal eigenvalue.
\end{remark}

\begin{proof}[Proof of \cref{T2.18}]
Using the eigenvalue equation for $\cH$, analogous to \eqref{E-Psi},
it is easily seen that
\begin{equation*}
\sup_{\mu\in\cP(D)}\; \inf_{\psi\in\Cc^{2,+}(D)}\;
\int_D \frac{\cH\psi}{\psi}\, \D{\mu}\,\le\, \lambda_D(\cH)\,.
\end{equation*}
To show the reverse inequality we consider a smooth domain $D_n\Subset D$.
Define  $\Tilde{\cH}$ as
\begin{equation*}
\Tilde\cH\psi \,=\, \cH\psi + \langle \grad\psi, a\grad\psi\rangle\,.
\end{equation*}
Note that $\psi\in \Cc^{2}(D)\mapsto \Tilde\cH\psi$ is convex.
 Therefore,
\begin{equation}\label{ET2.1E}
\begin{aligned}
\sup_{\mu\in\cP(D)}\;
\inf_{\psi\in\Cc^{2,+}(D)}\int_D \frac{\cH\psi}{\psi}\, \D{\mu} 
&\,\ge\, \sup_{\mu\in\cP(\Bar{D}_n)}\;
\inf_{\psi\in\Cc^{2,+}(D)}\int_D \frac{\cH\psi}{\psi}\, \D{\mu}  \\
&\,=\, \sup_{\mu\in\cP(\Bar{D}_n)}\;
\inf_{\psi\in\Cc^{2}(D)}\int_D \Tilde{\cH}\psi\, \D{\mu} \\
&\,=\,  \inf_{\psi\in\Cc^{2}(D)}\;
\sup_{\mu\in\cP(\Bar{D}_n)}\int_D \Tilde{\cH}\psi\, \D{\mu} \\
&\,=\,  \inf_{\psi\in\Cc^{2,+}(D)}\;
\sup_{\mu\in\cP(\Bar{D}_n)}\int_D \frac{\cH\psi}{\psi}\, \D{\mu}  \\
&\,=\,  \inf_{\psi\in\Cc^{2,+}(D)}\; \max_{\Bar{D}_n}\,\frac{\cH\psi}{\psi}\,,
\end{aligned}
\end{equation}
where in the third line we used Sion's minimax theorem \cite{Sion-58}.
In view of \cite[Theorem~1.1]{Quaas-08a} we have 
\begin{equation*}
\max_{\Bar{D}_n}\,\frac{\cH\psi}{\psi} \,\ge\,
\lambda_{D_n}(\cH)\quad \text{for all}\; \psi\in \Cc^{2,+}(D)\,,
\end{equation*}
and therefore, combining with \eqref{ET2.1E} we obtain
\begin{equation*}
\sup_{\mu\in\cP(D)}\; \inf_{\psi\in\Cc^{2,+}(D)} \,\ge\, \lambda_{D_n}(\cH)\,.
\end{equation*}
Now let $n\to\infty$, so that $D_n\uparrow D$ and use the fact
$\lambda_{D_n}(\cH)\to \lambda_D(\cH)$ \cite[Proposition~4.10]{Quaas-08a}
to deduce that
\begin{equation*}
\sup_{\mu\in\cP(D)}\;
\inf_{\psi\in\Cc^{2,+}(D)}\int_D \frac{\cH\psi}{\psi}\, \D{\mu}
\,\ge\, \lambda_{D}(\cH)\,,
\end{equation*}
thus establishing \eqref{ET2.18A}.
\end{proof}

\begin{proof}[Proof of \cref{T2.2}]
We  distinguish two cases.
First, suppose that $\lamstr(\Lg) >0$.
In this case we claim that the principal eigenfunction $\Phi^*$ is bounded.
For this, we argue as follows.
The principal eigenfunction is obtained as a limit of Dirichlet principal
eigenfunction over balls. Again, if $(\lambda_n, \Psi_n)$ is 
the principal eigenpair in $B_n(0)$ then it follows from \cite[Lemma 2.2]{ABS} that
\begin{equation*}
\Psi_n(x)\,=\, \Exp_x\Bigl[\E^{\int_0^{\uuptau_r}
[c(X_s)-\lambda_n]\, \D{s}}\,\Psi_n(X_{\uuptau_r})
\,\Ind_{\{\uuptau_r<\uptau_n\}}\Bigr], \quad x\in B_n\setminus B^c_r(0)\,.
\end{equation*}
for any $n>r>0$.
Since $\lambda_n\to \lamstr(\Lg)$, and $c$ vanishes at infinity,
it follows that for some large enough $r$ and $n_0\in\NN$, we have
$\sup_{x\in B_r^c}\, c(x) -\lambda_n<0$ for all $n\ge n_0$.
This shows that
$\sup_{B_n}\Psi_n=\sup_{B_r} \Psi_n$ for all $n\ge n_0$.
Thus, the limit of $\Psi_n$ is also bounded, proving the claim.

Therefore, we have
\begin{equation*}
\lamstr(\Lg) \,\le\, \ulamstr \,\df\,\sup_{\psi\in\Cc_b^{2,+}(\Rd)}\;
\inf_{\mu\in\cP(\Rd)}\;\int_{\Rd}\frac{\Lg\psi}{\psi}\, \D{\mu}\,.
\end{equation*}
Suppose $\psi\in\Cc_b^{2,+}(\Rd)$ satisfies
$\Lg\psi-(\lamstr(\Lg)+\epsilon)\psi\ge 0$
for some $\epsilon>0$.
Let $r>0$ be such that $\sup_{x\in B_r^c}\,c(x)<\lamstr(\Lg)+\varepsilon$.
It is fairly straightforward to show that $\psi$ satisfies
\begin{equation*}
\psi(x) \,\le \,
\Exp_x\Bigl[\E^{\int_0^{\uuptau_r}(c(X_s)-\lamstr(\Lg)-\epsilon)\,\D{s}}
\psi(X_{\uuptau_r})\Bigr]\quad \forall\,x\in B^c_r\,.
\end{equation*}
Indeed, since
\begin{equation*}
\Exp_x\Bigl[\E^{\int_0^{\uptau_R}(c(X_s)-\lamstr(\Lg)-\epsilon)\,\D{s}}
\psi^*(X_{\uptau_R})\,\Ind_{\{\uptau_R<\uptau_r\}}\Bigr]
\,\le\,
\norm{\psi}_\infty\, \Prob_x\bigl(\uptau_R<\uptau_r\bigr)
\quad \forall\,x\in B^c_r\,.
\end{equation*}
it follows that this quantity tends to $0$ as $R\to\infty$.

Thus  $C\Phi^*\ge\psi$ for some positive constant $C$,
and this is clearly impossible by the strong maximum principle.
It follows that $\lamstr(\Lg) = \ulamstr$.

On the other hand, as shown in \eqref{E-general}, we have
\begin{equation}\label{PT2.2B}
\lamstr(\Lg) \,=\, \olamstr \,\df\,\inf_{\psi\in\Cc_b^{2,+}(\Rd)}\;
\sup_{\mu\in\cP(\Rd)}\; \int_{\Rd}\frac{\Lg\psi}{\psi}\, \D{\mu}\,,
\end{equation}
and the proof of \cref{T2.18} gives us
\begin{equation}\label{PT2.2C}
\lamstr(\Lg) \,=\, \sup_{\mu\in\cP(\Rd)}\;\inf_{\psi\in\Cc_b^{2,+}(\Rd)}\;
 \int_{\Rd}\frac{\Lg\psi}{\psi}\, \D{\mu}\,.
\end{equation}

Next, suppose that $\lamstr(\Lg)=0$. Note that by definition we have
\begin{equation*}
\sup_{\psi\in\Cc_b^{2, +}(\Rd)}\, \inf_{\Rd}\,\frac{\Lg\psi}{\psi}\,\le\,
 \lambda'(\Lg)\,.
\end{equation*}
 Again by \cite[Theorem~1.9\,(iii)]{Berestycki-15}
we have $\lamstr(\Lg)=\lambda'(\Lg)=0$. This gives us
\begin{equation*}
\sup_{\psi\in\Cc_b^{2,+}(\Rd)}\,\inf_{\mu\in\cP(\Rd)}\,
\int_{\Rd}\frac{\Lg\psi}{\psi} \D{\mu}\,\le\, 0\,.
\end{equation*}
Since $\lambda'(\Lg)=0$, then by definition,
for any $\varepsilon>0$ there exists $\psi\in \Cc^{2, +}_b(\Rd)$ satisfying
$\Lg \psi \geq -\varepsilon\psi$,
which implies that
\begin{equation*}
\frac{\Lg\psi}{\psi}\,\ge\, -\varepsilon\,.
\end{equation*}
Thus 
\begin{equation*}
\inf_{\mu\in\cP(\Rd)}\,\int_{\Rd}\,\frac{\Lg\psi}{\psi} \D{\mu}
\,\ge\, -\varepsilon\,,
\end{equation*}
and since $\varepsilon>0$ is arbitrary, we obtain
\begin{equation*}
\sup_{\psi\in\Cc_b^{2, +}(\Rd)}\,\inf_{\mu\in\cP(\Rd)}\,
\int_{\Rd}\frac{\Lg\psi}{\psi} \D{\mu}\,=\, 0\,.
\end{equation*}

Lastly, consider the case $\lamstr(\Lg)<0$.
Then it is easy to show that $\Phi^*$ is bounded away from $0$.
Hence $\lamstr(\Lg)=\lambda''(\Lg)$.
By (A1)--(A3) and \cite[Theorem~1.7\,(iii)]{Berestycki-15},
we have $\lambda''(\Lg)\ge\lambda'(\Lg)$.
Since $\lamstr(\Lg)=\lambda''(\Lg)$, we have
$\lamstr(\Lg)\ge\lambda'(\Lg)$, which implies that
$\lamstr(\Lg)\ge\ulamstr$.
On the other hand, we have
$\ulamstr\ge\lamstr(\Lg)$ \cite[Theorem~1.7\,(ii)]{Berestycki-15}.
Thus $\ulamstr=\lamstr(\Lg)$.

We leave it to the reader to verify that \cref{PT2.2B,PT2.2C}
hold if we replace $\Cc_b^{2,+}(\Rd)$ with $\Cc^{2,+}(\Rd)$,
and that this is also true in the case $\lamstr(\Lg)\ge0$.

It remains to consider $\cG$.
Suppose $\lamstr(\cG)>0$.
As before, the corresponding principal eigenfunction is bounded.
Therefore, the 
second equality follows from the proof of \cref{E-general}. Moreover, 
$$\lamstr(\cG) \,\le \,\sup_{\psi\in\Cc_b^{2,+}(\Rd)}\,
\inf_{\mu\in\cP(\Rd)}\, \int_{\Rd}\frac{\cG\psi}{\psi}\, \D{\mu}\,.$$
Let $v_*$ be a minimizing selector of 
$$\cG\Phi^* \,=\, \lamstr(\cG)\Phi^*\,,$$
and recall that the associated process is recurrent.
Then, denoting the corresponding generator by $\Lg_{v_*}$ and
applying the previous result, we note that
\begin{align*}
\lamstr(\cG)\,=\,\lamstr(\Lg_{v_*})
&\,\ge\,
\sup_{\psi\in\Cc_b^{2,+}(\Rd)}\,
\inf_{\mu\in\cP(\Rd)}\, \int_{\Rd}\frac{\Lg_{v_*}\psi}{\psi}\, \D{\mu} \\
&\,\ge\, \sup_{\psi\in\Cc_b^{2,+}(\Rd)}\;
\inf_{\mu\in\cP(\Rd)}\; \int_{\Rd}\frac{\cG\psi}{\psi}\, \D{\mu}\,.
\end{align*}
Combining we have
$$\lamstr(\cG) \,= \,\sup_{\psi\in\Cc_b^{2,+}(\Rd)}\,
\inf_{\mu\in\cP(\Rd)}\, \int_{\Rd}\frac{\cG\psi}{\psi}\, \D{\mu}\,.$$
This completes the proof.
\end{proof}

We need the following lemma for the proofs of \cref{T2.4,T2.5}.

\begin{lemma}\label{L3.1}
Grant \cref{A2.1} or \cref{A2.2}.
Suppose that the function
$\psi\in\Sobl^{2,d,+}(\Rd)\cap\sorder(\Lyap)$ satisfies
\begin{equation}\label{EL3.1A}
\Lg_v\psi + c(x, v(x))\psi\,\ge\, \lambda\psi\quad \text{a.e. in}\ \Rd\,,
\end{equation}
for some Markov control $v$, and $\lambda\in\RR$.
Then there exists $r_\circ>0$ not depending on $\psi$ such that
\begin{equation}\label{EL3.1B}
\psi(x)\,\le\,\Exp_x^v
\Bigl[\E^{\int_0^{\uuptau_r}(c(X_s,v(X_s))-\lambda)\,\D{s}}
\,\psi(X_{\uuptau_r})\Bigr]\quad \text{for}\ x\in B^c_r\,,
\end{equation}
for all $r\ge r_\circ$.
In addition, if \cref{EL3.1A} holds with equality, then we have
equality in \cref{EL3.1B}.
\end{lemma}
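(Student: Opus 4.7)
The plan is to apply the It\^o--Krylov formula to an exponentially reweighted process and kill the boundary contribution at infinity by combining the hypothesis $\psi\in\sorder(\Lyap)$ with the Lyapunov estimate in \cref{A2.1} or \cref{A2.2}. First, for $r<\abs{x}<R$, I would apply It\^o--Krylov to $t\mapsto \E^{\int_0^t(c(X_s,v(X_s))-\lambda)\,\D{s}}\psi(X_t)$ under $\Prob_x^v$, stopped at $\uuptau_r\wedge\uptau_R$. Since $\psi\in\Sobl^{2,d,+}(\Rd)$ and \cref{EL3.1A} yields $\cL_v\psi+(c(\cdot,v(\cdot))-\lambda)\psi\ge 0$ a.e., the drift term is nonnegative, so the reweighted process is a local submartingale and optional stopping gives
\begin{equation*}
\psi(x)\,\le\,\Exp_x^v\Bigl[\E^{\int_0^{\uuptau_r\wedge\uptau_R}(c(X_s,v(X_s))-\lambda)\,\D{s}}\psi(X_{\uuptau_r\wedge\uptau_R})\Bigr]\,.
\end{equation*}
Splitting the right-hand side according to whether $\{\uuptau_r<\uptau_R\}$ or $\{\uptau_R\le\uuptau_r\}$, monotone convergence as $R\to\infty$ takes care of the first piece, producing the right-hand side of \cref{EL3.1B}.

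The heart of the argument is then showing that the tail contribution on $\{\uptau_R\le\uuptau_r\}$ vanishes as $R\to\infty$. Here I would exploit $\psi\in\sorder(\Lyap)$: given $\epsilon>0$, choose $R_\epsilon$ with $\psi\le\epsilon\Lyap$ on $B_{R_\epsilon}^c$. For $R\ge R_\epsilon$ the tail piece is bounded above by
\begin{equation*}
\epsilon\,\Exp_x^v\Bigl[\E^{\int_0^{\uptau_R}(c(X_s,v(X_s))-\lambda)\,\D{s}}\Lyap(X_{\uptau_R})\,\Ind_{\{\uptau_R\le\uuptau_r\}}\Bigr]\,.
\end{equation*}
To bound this uniformly in $R$, I would apply It\^o's formula to $\E^{\int_0^t(c(X_s,v(X_s))-\lambda)\,\D{s}}\Lyap(X_t)$ stopped at $\uptau_R\wedge\uuptau_r$ and invoke the Lyapunov inequality. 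Choose $r_\circ$ large enough that $\cK\subset B_{r_\circ}$ and, on $B_{r_\circ}^c$, the pointwise quantity $c(x,v(x))-\lambda-\gamma$ (under \cref{A2.2}), respectively $c(x,v(x))-\lambda-\ell(x)$ (under \cref{A2.1}), is nonpositive. Such an $r_\circ$ exists because of the gap condition in \cref{A2.2}(ii) or the inf-compactness of $\beta\ell-\max_u c$ in \cref{A2.1}(ii), and it depends only on the operator, $v$ and $\lambda$, not on $\psi$. The drift of the reweighted Lyapunov process is then nonpositive on $B_{r_\circ}^c$, making it a supermartingale up to $\uptau_R\wedge\uuptau_{r_\circ}$; optional stopping inside the annulus $B_R\setminus \bar B_r$ gives a bound by $\Lyap(x)$ plus a constant times $\max_{\partial B_r}\Lyap$, uniformly in $R$. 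Letting $R\to\infty$ and then $\epsilon\downarrow 0$ establishes \cref{EL3.1B}.

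For the equality case, when \cref{EL3.1A} holds with equality the reweighted process in the first step is a local \emph{martingale} rather than a submartingale, so the first displayed inequality becomes an equality; the identical tail estimate then delivers equality in \cref{EL3.1B}. The main technical obstacle I anticipate is the uniform-in-$R$ bound on the reweighted Lyapunov expectation: one must simultaneously accommodate the exponential weight $\E^{\int(c-\lambda)\,\D{s}}$, which is not integrable a priori, and the indicator $\Ind_{\{\uptau_R\le\uuptau_r\}}$. This is precisely what forces $r_\circ$ to be chosen in terms of the dissipativity margin in \cref{A2.1,A2.2}, and it is also why the extra regularity $\psi\in\sorder(\Lyap)$ is built into the hypotheses.
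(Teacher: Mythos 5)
Your proposal is correct and follows essentially the same route as the paper's proof: It\^o--Krylov on the annulus with the split at $\uuptau_r\wedge\uptau_R$, choice of $r_\circ$ so that $c-\lambda$ is dominated by $\ell$ (under \cref{A2.1}) or by $\gamma$ (under \cref{A2.2}) outside a compact set containing $\cK$, the tail term killed by combining $\psi\in\sorder(\Lyap)$ with the supermartingale bound for the exponentially weighted Lyapunov function, monotone convergence for the main term, and the equality case read off from the same two displays. The only cosmetic difference is your $\epsilon$--$R_\epsilon$ phrasing of the little-$\sorder$ condition, where the paper factors out $\max_{\partial B_R}\abs{\psi}/\Lyap\to0$ against the uniform bound $\Lyap(x)$, which is the same estimate.
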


\begin{proof}
We only consider the case of \cref{A2.1}.
The proof under \cref{A2.2} is completely analogous.
Choose $r$ large enough so that $\max_{u\in\Act}(c(x,u)-\lambda)< \ell(x)$
in $B^c_r$ and $\cK\subset B_r$.
For any $R>r>0$, we have
\begin{equation}\label{PL3.1D}
\begin{aligned}
\psi(x) &\,\le\,
\Exp_x^v\Bigl[\E^{\int_0^{\uuptau_r}(c(X_s,v(X_s))-\lambda)\,\D{s}}\,
\psi(X_{\uuptau_r})\Ind_{\{\uuptau_r<\uptau_R\}}\Bigr]\\
&\mspace{180mu}
+ \Exp_x^v\Bigl[\E^{\int_0^{\uptau_R}(c(X_s,v(X_s))-\lambda)\,\D{s}}\,
\psi(X_{\uptau_R})\Ind_{\{\uuptau_r>\uptau_R\}}\Bigr]\,.
\end{aligned}
\end{equation}
We first estimate the limit of the second term of \cref{PL3.1D} as $R\to\infty$. 
\begin{equation}\label{PL3.1E}
\begin{aligned}
\Exp_x^v\Bigl[& \E^{\int_0^{\uptau_R} (c(X_s,v(X_s))-\lambda)\,\D{s}}\,
\psi(X_{\uptau_R})\Ind_{\{\uuptau_r>\uptau_R\}}\Bigr]\\
&\mspace{100mu}\,\le\, \biggl(\max_{\partial B_R}\,\frac{\abs{\psi}}{\Lyap}\biggr)
\Exp_x^v\Bigl[\E^{\int_0^{\uptau_R}(c(X_s,v(X_s))-\lambda)\,\D{s}}\,
\Lyap(X_{\uptau_R})\Ind_{\{\uuptau_r>\uptau_R\}}\Bigr]\\
&\mspace{100mu}\,\le\, \biggl(\max_{\partial B_R}\,\frac{\abs{\psi}}{\Lyap}\biggr)
\Exp_x^v\Bigl[\E^{\int_0^{\uptau_R}\ell(X_s)\,\D{s}}\,
\Lyap(X_{\uptau_R})\Ind_{\{\uuptau_r>\uptau_R\}}\Bigr]\\
&\mspace{100mu} \,\le\,
\biggl(\max_{\partial B_R}\,\frac{\abs{\psi}}{\Lyap}\biggr) \Lyap(x)
\,\xrightarrow[R\to\infty]{}\, 0\,,
\end{aligned}
\end{equation}
where in the last line we use that fact that $\psi\in\sorder(\Lyap)$.
Thus letting $R\to\infty$ in \cref{PL3.1D}, and using the
monotone convergence theorem, we obtain \cref{EL3.1B}.
The last sentence is evident from \cref{PL3.1D,PL3.1E}.
This completes the proof.
\end{proof}

We continue with the proof of \cref{T2.4}.

\begin{proof}[Proof of \cref{T2.4}]
Throughout this proof $\lamstr\equiv\lamstr(\cG)$.
Since $\Phi^*\in\order(\Lyap^\beta)$ by \cref{T2.3}, it follows that
\begin{equation}\label{ET2.4C}
\lamstr \,\le\, \sup_{\psi\in \Cc^{2,+}(\Rd)\cap\sorder(\Lyap)}\;
\inf_{\mu\in\cP(\Rd)}\; \int_{\Rd}\frac{\cG\psi}{\psi}\, \D{\mu}\,.
\end{equation}
We claim that for any $\psi\in\Cc^{2,+}(\Rd)\cap\sorder(\Lyap)$ we have
\begin{equation}\label{ET2.4D}
\inf_{\Rd}\;\frac{\cG\psi}{\psi} \,\le\, \lamstr\,.
\end{equation}
Indeed, suppose to the contrary that for some $\lambda>\lamstr$ it holds that
\begin{equation*}
\inf_{\Rd}\;\tfrac{\cG\psi}{\psi}\,=\,\lambda\,.
\end{equation*}
This implies that
\begin{equation*}
\cG\psi -\lambda\psi\,\ge\, 0\quad \text{in}\ \Rd\,.
\end{equation*}
Let $v_*$ be a measurable selector of the HJB in \cref{T2.3}.
To simplify the notation we let $c_{v_*}(x)\df c\bigl(x,v_*(x)\bigr)$.
Then we have
\begin{equation}\label{ET2.4E}
\cL_{v_*}\psi+ \bigl(c_{v_*}-\lambda\bigr)\psi\,\ge\, 0\,.
\end{equation}
By \cref{L3.1,T2.3} we have
\begin{equation}\label{ET2.4F}
\psi(x) \,\le\, \Exp^{v_*}_{x}
\Bigl[\E^{\int_0^{\breve\uptau_r} (c_{v_*}(X_s)-\lambda)\,\D{s}} \,
\psi(X_{\breve\uptau_r})\Bigr], \quad x\in B_r^c\,,
\end{equation}
and
\begin{equation}\label{ET2.4H}
\Phi^*(x) \,=\, \Exp^{v_*}_{x}\Bigl[\E^{\int_0^{\breve\uptau_r} (c_{v_*}(X_s)
-\lamstr)\D{s}} \,\Phi^*(X_{\breve\uptau_r})\Bigr], \quad x\in B_r^c\,,
\end{equation}
respectively.
Let $\kappa=\max_{\bar{B_r}} \frac{\psi}{\Phi^*}$.
Then from \cref{ET2.4F,ET2.4H} we see that
$\psi\le\kappa \Phi^*$ in $\Rd$, and for some $\abs{x_0}\le r$ we have 
$\varphi(x_0)-\kappa\Phi^*(x_0)=0$.
Since 
\begin{equation*}
\cL_{v_*}\Phi^*+ (c_{v_*}-\lamstr)\Phi^* \,=\, 0\,,
\end{equation*}
using \cref{ET2.4E} we obtain
\begin{equation*}
\cL_{v_*}(\kappa\Phi^*-\psi) - (c_{v_*}-\lamstr)^-(\kappa\Phi^*-\psi)
\,\le\, 0\quad \text{in}\ \Rd\,.
\end{equation*}
It follows by the strong maximum principle that $\kappa\Phi^*=\psi$,
and this contradicts \cref{ET2.4E} since $\lambda>\lamstr$.
This proves \cref{ET2.4D}.

Now using \cref{ET2.4D} we obtain
\begin{equation*}
\sup_{\psi\in\Cc^{2,+}(\Rd)\cap\sorder(\Lyap)}\; \inf_{\mu\in\cP(\Rd)}\;
\int_{\Rd}\frac{\cG\psi}{\psi}\, \D{\mu} \,\le\,
\sup_{\psi\in\Cc^{2,+}(\Rd)\cap\sorder(\Lyap)}\; \inf_{\Rd}\;
\frac{\cG\psi}{\psi} \,\le\, \lamstr\,.
\end{equation*}
Hence, using \cref{ET2.4C}, we obtain \cref{ET2.4A} .

From \cref{E-lamstr} it is easily seen that
\begin{equation*}
\sup_{\Rd}\, \frac{\cG\psi}{\psi}
\,\ge\, \lamstr\quad \text{for any}\ \psi\in\Cc^{2,+}(\Rd)\,,
\end{equation*}
and therefore, 
\begin{equation*}
\sup_{\mu\in\cP(\Rd)}\; \int_{\Rd}\frac{\cG\psi}{\psi}\; \D{\mu}
\,\ge\, \lamstr \quad \text{for any}\ \psi\in\Cc^{2,+}(\Rd)\,.
\end{equation*}
This gives us
\begin{equation*}
\inf_{\psi\in\Cc^{2,+}(\Rd)}\; \sup_{\mu\in\cP(\Rd)}\;
 \int_{\Rd}\frac{\cG\psi}{\psi}\, \D{\mu} \,\ge\, \lamstr\,.
\end{equation*}
Now choosing $\psi=\Phi^*$ in the above display,
we get equality which proves \cref{ET2.4B}.
\end{proof}

The function space used in the representation \eqref{ET2.4A} can be extended to
$\cA_\Lyap\df\Cc^{2, +}(\Rd)\cap\order(\Lyap)$, provided we impose certain
assumptions on the Lyapunov function $\Lyap$.
This is the subject of the following theorem.

\begin{theorem}
Suppose that any one of the following is true.
\begin{itemize}
\item[\textup{(}a\textup{)}]
Assumption~\ref{A2.1}\,(i) holds with an inf-compact function $\Lyap$
and the function 
\begin{equation*}
x\mapsto \ell(x)-\frac{\langle \grad\Lyap(x), a(x)\grad\Lyap\rangle}{\Lyap^2(x)
\log\Lyap(x)}-\max_{u\in\Act}\, c(x,u)\quad \text{is inf-compact}.
\end{equation*}
\item[\textup{(}b\textup{)}]
Assumption~\ref{A2.2} holds with an inf-compact function $\Lyap$ and 
\begin{equation*}
\lim_{\abs{x}\to\infty}\frac{\langle \grad\Lyap(x), a(x)\grad\Lyap\rangle}
{\Lyap^2(x)\log\Lyap(x)} \,=\, 0\,.
\end{equation*}
\end{itemize}
Then we have
\begin{equation*}
\lamstr(\cG) \,=\, \sup_{\psi\in \cA_\Lyap}\;
\inf_{\mu\in\cP(\Rd)}\; \int_{D}\frac{\cG\psi}{\psi}\, \D{\mu}\,.
\end{equation*}
\end{theorem}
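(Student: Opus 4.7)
The plan is to reduce the assertion to \cref{T2.4} by replacing $\Lyap$ with the augmented Lyapunov function $\widetilde\Lyap\df \Lyap\log\Lyap$, after an initial rescaling so that $\log\Lyap\ge 1$ on $\Rd$. Inf-compactness of $\Lyap$ ensures that any $\psi\in\cA_\Lyap=\Cc^{2,+}(\Rd)\cap\order(\Lyap)$ satisfies $\psi(x)/\widetilde\Lyap(x)\le C/\log\Lyap(x)\to 0$ as $\abs{x}\to\infty$, so $\cA_\Lyap\subset\sorder(\widetilde\Lyap)$. The bound $\lamstr(\cG)\le\sup_{\psi\in\cA_\Lyap}\inf_{\mu\in\cP(\Rd)}\int\cG\psi/\psi\,\D\mu$ is immediate by choosing $\psi=\Phi^*$, since $\Phi^*\in\order(\Lyap^\beta)\subset\cA_\Lyap$ by \cref{T2.3}. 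Thus the heart of the matter is the opposite inequality, namely $\inf_{\Rd}\cG\psi/\psi\le\lamstr(\cG)$ for every $\psi\in\cA_\Lyap$.

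The first step is to verify that $\widetilde\Lyap$ itself carries a Lyapunov-type inequality. The chain rule for $\cL_u$ with $f(t)=t\log t$ yields
\begin{equation*}
\cL_u\widetilde\Lyap \,=\,(\log\Lyap+1)\,\cL_u\Lyap
+\frac{\langle\grad\Lyap,\,a\grad\Lyap\rangle}{\Lyap}\,,
\end{equation*}
and combining this with \cref{A2.1}(i) [respectively \cref{EA2.2A}] and the fact that $(\log\Lyap+1)/\log\Lyap\to 1$ at infinity, one obtains, outside a sufficiently large compact set,
\begin{equation*}
\cL_u\widetilde\Lyap \,\le\,-\widetilde\ell\,\widetilde\Lyap\,,
\qquad \widetilde\ell \,\df\, \ell
-\frac{\langle\grad\Lyap,\,a\grad\Lyap\rangle}{\Lyap^2\log\Lyap}\,,
\end{equation*}
in case (a), and the analogous bound with $\widetilde\ell\equiv\widetilde\gamma$ for some $\widetilde\gamma\in(\norm{c^-}_\infty+\limsup_{\abs{x}\to\infty}\max_{u\in\Act} c(\cdot,u),\,\gamma)$ in case (b), where the assumed vanishing of $\langle\grad\Lyap,a\grad\Lyap\rangle/(\Lyap^2\log\Lyap)$ enters. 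By the inf-compactness demanded in (a), $\widetilde\ell-\max_{u}c(\cdot,u)$ is inf-compact; by (b), $\widetilde\gamma$ strictly exceeds $\limsup\max_{u}c$. Either way $\widetilde\ell>\max_{u}c(\cdot,u)-\lamstr(\cG)$ outside some compact set.

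The second step replays \cref{L3.1} with the pair $(\widetilde\Lyap,\widetilde\ell)$ in place of $(\Lyap,\ell)$: for $\psi\in\cA_\Lyap$ obeying $\cG\psi\ge\lambda\psi$ with $\lambda=\lamstr(\cG)+\epsilon$, the estimate \cref{PL3.1E} is adjusted by writing $\psi\le C\Lyap=C\widetilde\Lyap/\log\Lyap$ and using $\log\Lyap(X_{\uptau_R})\ge\log(\min_{\partial B_R}\Lyap)\to\infty$ (inf-compactness of $\Lyap$), giving
\begin{equation*}
\Exp_x^v\Bigl[\E^{\int_0^{\uptau_R}(c(X_s,v(X_s))-\lambda)\,\D{s}}
\psi(X_{\uptau_R})\Ind_{\{\uuptau_r>\uptau_R\}}\Bigr]
\,\le\,\frac{C\,\widetilde\Lyap(x)}{\log\min_{\partial B_R}\Lyap}
\,\xrightarrow[R\to\infty]{}\,0\,,
\end{equation*}
via the supermartingale property of $\E^{\int_0^{t}\widetilde\ell(X_s)\,\D{s}}\widetilde\Lyap(X_t)$. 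This delivers the stochastic representation analogous to \cref{EL3.1B} for $\psi$, and the remainder of the proof of \cref{T2.4}---defining $\kappa=\max_{\bar B_r}\psi/\Phi^*$, deducing $\psi\le\kappa\Phi^*$ globally with equality at some interior point, and applying the strong maximum principle to $\kappa\Phi^*-\psi$ for the linear operator $\cL_{v_*}-(c_{v_*}-\lamstr(\cG))^-$---carries through unchanged and produces the desired contradiction. The chief difficulty is the first step: ensuring that the second-derivative correction $\langle\grad\Lyap,a\grad\Lyap\rangle/(\Lyap^2\log\Lyap)$ does not spoil the strict dominance of the new rate over $\max_{u}c(\cdot,u)-\lamstr(\cG)$ at infinity, which is precisely what hypotheses (a) and (b) are engineered to guarantee.
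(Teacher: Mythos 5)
Your proposal is correct and follows essentially the same route as the paper: replace $\Lyap$ by the augmented Lyapunov function $\Lyap\log\Lyap$ with rate $\widetilde\ell=\ell-\langle\grad\Lyap,a\grad\Lyap\rangle/(\Lyap^2\log\Lyap)$, observe $\order(\Lyap)\subset\sorder(\Lyap\log\Lyap)$ by inf-compactness of $\Lyap$, and then run the \cref{L3.1}/\cref{T2.4} argument verbatim. The only difference is that you spell out some details the paper leaves implicit (the chain-rule computation, the choice of $\widetilde\gamma$ in case (b), and the easy inequality via $\Phi^*\in\order(\Lyap^\beta)\subset\cA_\Lyap$), which is fine.
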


\begin{proof}
From \cite[Theorems~4.1 and~4.2]{ABS} we note that parts (i)--(ii) of
\cref{T2.3} hold under the above assumptions. 
Using \eqref{ET2.3B} it is
easily seen that $\Phi^*\in\order(\Lyap)$.
Now define 
\begin{equation*}
\Tilde\Lyap \,\df\, \Lyap \log\Lyap\,, \quad \Tilde\ell \,\df\,
\ell-\frac{\langle \grad\Lyap(x), a(x)\grad\Lyap\rangle}{\Lyap^2(x)\log\Lyap(x)}\,.
\end{equation*}
 Then an easy calculation gives
\begin{align*}
\max_{u\in\Act}\, \cL_u \Tilde\Lyap 
&\,\le\, \kappa_1 (\log\Lyap)  \Ind_\cK-\ell\Tilde\Lyap
+ \kappa_1 \Ind_\cK-\ell \Lyap
+ \frac{1}{\Lyap}\langle \grad \Lyap, a\grad \Lyap\rangle\\
&\,\le\, \kappa\Bigl(\max_{\cK}\, \Lyap + 1\Bigr) \Ind_{\cK}
-\Tilde\ell\Tilde\Lyap.
\end{align*}
Therefore $\Tilde\Lyap$ can be used as a new Lyapunov function pay-off function
$\Tilde\ell$.
Again, $\Lyap$ being inf-compact we have
$\order(\Lyap)\subset\sorder(\Tilde\Lyap)$.
Hence for any function $\psi$ satisfying \eqref{ET2.4E} the
estimate in \eqref{ET2.4F} holds.
Then rest of the proof follows from Theorem~\ref{T2.4}.
\end{proof}

The proof of \cref{T2.5} which follows, uses an argument similar 
to the one used in \cref{T2.4}.

\begin{proof}[Proof of \cref{T2.5}]
It is given that $\varphi(x_0)>0$.
Without loss of generality we may assume that $x_0=0$
and $\varphi>0$ in $B_\delta(0)$ for some $\delta>0$.
Choose a stable optimal Markov policy $v_*$ from \eqref{ET2.3A} as in the
proof of \cref{T2.4}.
By \cref{L3.1} we have the stochastic representation in \cref{ET2.4F}
for all large enough $r>0$.
Let $\kappa=\max_{\bar B_r}\frac{\varphi^+}{\Phi^*}$.
Note that $\kappa>0$, since $\varphi>0$ in $B_\delta(0)$.
It now follows from \cref{ET2.3B,ET2.4F}
that $\varphi\le \kappa\Phi^*$ in $\Rd$, and for some $y_0\in \Bar{B}_r$
we have $\varphi(y_0)=\kappa\Phi^*(y_0)$.
Combining the inequalities
\begin{equation*}
\cL_{v_*}\Phi^*+ \bigl(c_{v_*}-\lamstr(\cG)\bigr)\Phi^*
\,=\, 0\,,\quad \text{and}
\quad \cL_{v_*}\varphi+ \bigl(c_{v_*}-\lamstr(\cG)\bigr)\varphi \,\ge\, 0\,,
\end{equation*}
we obtain
\begin{equation*}
\cL_{v_*}(\kappa\Phi^*-\varphi)
- \bigl(c_{v_*}-\lamstr(\cG)\bigr)^-(\kappa\Phi^*-\varphi)
\,\le\, 0\quad \text{in}\ \Rd\,.
\end{equation*}
Therefore, $\kappa\Phi^*=\varphi$ in $\Rd$
by the strong maximum principle. This completes the proof.
\end{proof}

We continue with the proof of \cref{T2.6}.

\begin{proof}[Proof of \cref{T2.6}]
To the contrary, suppose that $\varphi(x_0)>0$. Without loss of generality we may
assume that $x_0=0$
and $\varphi>0$ in $B_\delta(0)$ for some $\delta>0$.
Choosing an optimal stable control $v_*$  we deduce, as in the proof
of \cref{T2.5}, that for
some positive $\kappa$ we have
$\kappa\Phi^*-\varphi\ge 0$ and the minimum value $0$ is attained at some
some point $y_0$.
Denote by $\xi=\frac{\varphi}{\kappa\Phi^*}$.
An easy calculation gives
\begin{equation*}
\cL_{v_*} \xi + \langle b + 2 a\grad(\log \Phi^*), \grad \xi\rangle
+ \lamstr(\cG) \xi\,\ge\,
 0\,, \quad \text{in}\; \Rd\,.
\end{equation*}
Note that $\xi\le 1$ and $\xi(y_0)=1$.
Thus by the strong maximum principle we have $\xi=1$, implying
that $\varphi=\kappa\Phi^*$.
But this is not possible as $\lamstr(\cG)<0$.
Hence we must have $\varphi\le 0$.
The result follows
by another application of the strong maximum principle.
\end{proof}

To prove \cref{T2.7} we first consider an eigenvalue problem for a perturbed $c$.
For Assumption~(a) in \cref{T2.7} we define
\begin{equation*}
c_m(x,u) \,=\, c(x,u)+\frac{1}{m}\ell(x) \quad \text{for}\ x\in\Rd\,, \ m\ge 1\,.
\end{equation*}
For Assumption~(b) in \cref{T2.7} and $m\ge 1$ we consider a smooth function
$\zeta_m\colon\Rd\to[0, 1]$, satisfying $\zeta_m(x)=1$ in $B_m$
and $\zeta_m(x)=0$ in $B^c_{m+1}$, and define
\begin{equation}\label{E-cm}
c_m(x,u) \,\df\, \zeta_m(x) c(x,u)
+ (1-\zeta_m(x))\Bigl(\delta+\limsup_{\abs{z}\to\infty}\,
\max_{u\in\Act}\,c(z,u)\Bigr)\,,
\end{equation}
where $\delta$ is small enough to satisfy 
\begin{equation*}
\delta\,<\,\gamma - \norm{c^-}_\infty
- \limsup_{\abs{z}\to\infty}\, \max_{u\in\Act}\,c(z,u)\,.
\end{equation*}
Then following an argument similar to \cite[Lemmas~3.4 and~3.5]{BS18}
we can establish the following.

\begin{lemma}\label{L3.2}
Grant the assumptions of \cref{T2.7}.
Then there exists a unique $\Psi^*_m\in\Cc^{2,+}(\Rd)$ with $\inf_{\Rd}\Psi^*_m>0$,
satisfying
\begin{equation*}
\begin{aligned}
\cG_m \Psi^*_m(x)
&\,\df\, a^{ij}(x)\,\partial_{ij} \Psi^*_m(x)
+ \min_{u\in\Act}\,\Bigl[b^{i}(x,u)\, \partial_{i} \Psi^*_m(x)
+ c_m(x,u) \Psi^*_m(x)\Bigr]\\
&\,=\,\lamstr(\cG_m)\Psi^*_m(x)\quad\forall\,x\in\Rd\,.
\end{aligned}
\end{equation*}
In addition, we have
\begin{equation*}
\lamstr(\cG_m) \,=\, \inf_{U\in\Uadm}\, \limsup_{T\to\infty}\, \frac{1}{T}\,
\log\Exp^U_x\Bigl[\E^{\int_0^T c_m(X_s, U_s)\, \D{s}}\Bigr]\,,
\end{equation*}
and $\lamstr(\cG_m)\to\lamstr(\cG)$ as $m\to\infty$.
\end{lemma}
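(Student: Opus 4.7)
The strategy is to view $\cG_m$ as a controlled semilinear operator inheriting either \cref{A2.1} or \cref{A2.2}, apply \cref{T2.3} to obtain the eigenpair $(\lamstr(\cG_m),\Psi^*_m)$ together with its risk-sensitive interpretation, then upgrade to the uniform positive lower bound, and finally pass to the limit $m\to\infty$. The outline parallels \cite[Lemmas~3.4 and~3.5]{BS18}.

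First, I would verify that the standing hypothesis persists for $c_m$. Under Assumption~(a) in \cref{T2.7}, $c_m=c+\frac{1}{m}\ell$, and the inf-compactness of $\beta\ell-\max_u c$ is preserved for $\cG_m$ with the parameter $\tilde\beta\df\beta+\frac{1}{m}$, which lies in $(0,1)$ for $m$ large, since $\tilde\beta\ell-\max_u c_m=\beta\ell-\max_u c$. Under Assumption~(b), the cutoff \eqref{E-cm} together with the admissible range of $\delta$ ensures $\|c_m^-\|_\infty+\limsup_{|x|\to\infty}\max_u c_m(x,u)<\gamma$ with the same $\Lyap$ and $\gamma$. \Cref{T2.3} then produces a positive $\Psi^*_m\in\Cc^{2,+}(\Rd)\cap\order(\Lyap^{\beta_m})$ satisfying $\cG_m\Psi^*_m=\lamstr(\cG_m)\Psi^*_m$, identifies $\lamstr(\cG_m)$ with the optimal risk-sensitive cost under $c_m$, and gives uniqueness up to a positive scalar.

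The central step is the lower bound $\inf_{\Rd}\Psi^*_m>0$. Selecting an optimal Markov control $v_{*,m}$ from the HJB equation for $\cG_m$ and applying the stochastic representation from \cref{T2.3}(ii),
\begin{equation*}
\Psi^*_m(x)\,=\,\Exp_x^{v_{*,m}}\Bigl[\E^{\int_0^{\uuptau_r}(c_m(X_s,v_{*,m}(X_s))-\lamstr(\cG_m))\,\D{s}}\,\Psi^*_m(X_{\uuptau_r})\Bigr],\quad x\in B_r^c\,,
\end{equation*}
one exploits the structure of the perturbation: under (a) the integrand contains the inf-compact term $\frac{1}{m}\ell$, while under (b) it is bounded below by a strictly positive constant on $B_{m+1}^c$. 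In either scenario, after normalizing $\Psi^*_m(0)=1$, a Foster--Lyapunov estimate for the closed-loop process combined with the Harnack inequality on compact sets yields a uniform positive lower bound for $\Psi^*_m$ on $\Rd$. I expect this step to be the main technical obstacle, since it is precisely here that the design of $c_m$ is used to prevent $\Psi^*_m$ from decaying to zero at infinity.

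Finally, the convergence $\lamstr(\cG_m)\to\lamstr(\cG)$ follows from monotonicity and compactness. Under (a), $c_m\downarrow c$ and $\cG_m\psi\ge\cG\psi$ on positive $\psi$ give $\lamstr(\cG_m)\ge\lamstr(\cG_{m+1})\ge\lamstr(\cG)$, so $\lambda^\infty\df\lim_m\lamstr(\cG_m)\ge\lamstr(\cG)$ exists; an analogous comparison handles case (b). For the reverse inequality, normalizing $\Psi^*_m(0)=1$ and applying interior Schauder together with Harnack estimates, I extract a locally uniform subsequential limit $\Psi^\infty\in\Cc^{2,+}(\Rd)$ satisfying $\cG\Psi^\infty=\lambda^\infty\Psi^\infty$. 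Since the bounds $\Psi^*_m\in\order(\Lyap^{\beta_m})$ persist in the limit with a common $\beta<1$, $\Psi^\infty$ lies in $\sorder(\Lyap)$, and \cref{T2.5} forces $\Psi^\infty$ to be a scalar multiple of $\Phi^*$, whence $\lambda^\infty=\lamstr(\cG)$.
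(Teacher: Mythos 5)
Your overall route---check that $c_m$ inherits \cref{A2.1} or \cref{A2.2}, apply \cref{T2.3} to $\cG_m$ to obtain the eigenpair, the risk-sensitive identity and uniqueness, deduce $\inf_{\Rd}\Psi^*_m>0$ from the stochastic representation, and pass to the limit via monotonicity and \cref{T2.5}---is the natural one, and it matches the spirit of the paper, which offers no proof beyond the pointer to \cite[Lemmas~3.4 and~3.5]{BS18}. Your verification that $c_m$ satisfies the standing assumptions, your case-(a) lower bound (the exponent in \cref{ET2.3B} is nonnegative outside a compact set because $\tfrac1m\ell$ is inf-compact, and recurrence of the closed-loop process gives $\uuptau_r<\infty$ a.s.; Harnack is not really needed there), and the final limiting argument are all sound in outline.

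The genuine gap is the case-(b) half of your ``central step''. Writing $\theta_m\df\delta+\limsup_{|z|\to\infty}\max_{u\in\Act}c(z,u)$, your assertion that the integrand is ``bounded below by a strictly positive constant on $B_{m+1}^c$'' is exactly the inequality $\lamstr(\cG_m)<\theta_m$, since $c_m\equiv\theta_m$ there; you never prove it, and it does not follow from \cref{A2.2}, which constrains $c$ only near infinity: taking $c$ very large on a fixed compact set leaves $\gamma$, $\norm{c^-}_\infty$ and $\limsup_{\infty}\max_u c$ unchanged, while a Donsker--Varadhan lower bound of the form $\lamstr(\cG)\ge\int c\,\D\nu-I(\nu)$, with $\nu$ concentrated on that compact set, makes $\lamstr(\cG_m)\ge\lamstr(\cG)$ arbitrarily large. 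When $\lamstr(\cG_m)\ge\theta_m$ the very representation you invoke works against you, giving $\Psi^*_m(x)\le\bigl(\max_{\partial B_r}\Psi^*_m\bigr)\,\Exp_x^{v_{*,m}}\bigl[\E^{-(\lamstr(\cG_m)-\theta_m)\uuptau_r}\bigr]$ for $r>m+1$, so the near-monotone mechanism is unavailable; and because of the uniqueness statement in \cref{T2.3}\,(iii) you cannot escape by switching to a different eigenfunction. Thus under \cref{A2.2} the positive lower bound hinges on either establishing $\lamstr(\cG_m)<\theta_m$ (from structure of $c_m$ or from additional hypotheses carried over from \cite{BS18}) or producing the bound by an entirely different argument---which is precisely the substance that the citation to \cite[Lemmas~3.4 and~3.5]{BS18} is meant to supply and that your sketch leaves as an unsupported assertion at the step you yourself identify as the crux.
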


We are now ready to prove \cref{T2.7}.

\begin{proof}[Proof of \cref{T2.7}]
Let
\begin{equation*}
\lambda''(\cG)\,=\,\inf\,\Bigl\{\lambda\,\colon \exists \psi\in\Cc^2(\Rd),\
\inf_{\Rd}\psi>0, \text{\ satisfying\ } \cG\psi-\lambda\psi\le 0
\text{\ a.e.\ in}\ \Rd\Bigr\}\,.
\end{equation*}
It then follows from \cref{E-lamstr} that $\lamstr(\cG)\le \lambda''(\cG)$.
On the other hand, note that $c\le c_m$ for all $m$ large,
where $c_m$ is the function in \cref{E-cm}.
Thus, using \cref{L3.2}, we obtain
\begin{equation*}
\cG\Psi^*_m-\lamstr(\cG_m)\Psi^*_m \,\le\, 0\quad \text{in}\ \Rd,
\quad \inf_{\Rd}\,\Psi^*_m>0\,.
\end{equation*}
Therefore, $\lambda''(\cG)\le \lamstr(\cG_m)$ for all $m$,
and letting $m\to\infty$ we obtain $\lamstr(\cG)=\lambda''(\cG)$.
This concludes the proof.
\end{proof}

Next, we present the proof of \cref{T2.8}.

\begin{proof}[Proof of \cref{T2.8}]
Since the existence of a solution is known when $\lambda=\lamstr(\cG)$,
we only consider the case $\lambda>\lamstr(\cG)$.
Recall $\lambda_n$ from Lemma~\ref{L2.1}.
Since $\lim_{n\to\infty}\lambda_n=\lamstr(\cG)$,
we have $\lambda>\lambda_n$ for all $n$.
For each $n$, let $f_n$ be a non-zero, non-negative function supported
in $B_{n+1}\setminus B_n$.
Note that the principal eigenvalue of $\cG-\lambda$, in the sense of \cref{E-lamD},
is $\lamstr(\cG)-\lambda<0$.
Therefore, by \cite[Theorem~1.9]{Quaas-08a}, there exists a unique
$\varphi_n\in\Cc^2(B_{n+1})\cap\Cc(\Bar{B}_{n+1})$ satisfying
\begin{equation}\label{PT2.8A}
\cG\varphi_n-\lambda\varphi_n \,=\, - f_n\quad \text{in}\ B_{n+1},
\quad \text{and}\quad \varphi_n=0\quad \text{on}\ \partial B_{n+1}\,.
\end{equation}
Moreover, $\varphi_n\ge 0$.
Let $v_n$ be  a measurable selector of \cref{PT2.8A}, i.e.,
\begin{equation*}
a^{ij}(x)\,\partial_{ij} \varphi_n
+ b^{i}\bigl(x,v_n(x)\bigr)\,\partial_{i}\varphi_n
+ \bigl(c(x, v_n(x))-\lambda\bigr) \varphi_n \,=\,
-f_n\quad \text{in}\ B_{n+1}\,.
\end{equation*}
Applying It\^{o}'s formula, we obtain
\begin{align*}
\varphi_n(x) &\,=\, \Exp^{v_n}_x
\Bigl[\E^{\int_0^{t\wedge\uptau_{n+1}}(c(X_s, v_n(X_s))-\lambda)\,\D{s}}
\,\varphi_n(X_{t\wedge\uptau_{n+1}})\Bigr]\\
&\mspace{200mu}
+ \Exp^{v_n}_x\biggl[\int_0^{t\wedge\uptau_{n+1}}
\E^{\int_0^s (c(X_r, v_n(X_r))-\lambda)
\,\D{r}}\,f_n(X_s)\, \D{s}\biggr]
\end{align*}
for all $t\ge 0$ and $x\in B_{n+1}$.
Since $f_n\gneqq 0$, this in particular, implies that $\varphi_n>0$ in $B_{n+1}$.
We normalize $\varphi_n(0)=1$ by scaling $f_n$,
and applying Harnack's inequality to \cref{PT2.8A},
we deduce that for any compact set $K$ we can find a constant $C_K$ such that
\begin{equation*}
\norm{\varphi_n}_{\Sob^{2, p}(K)}\,<\, C_K\quad \text{for all\ } n
\text{\ sufficiently large and\ } p\in (1, \infty)\,.
\end{equation*}
It is then standard to find a $\Psi\in\Sobl^{2,p}(\Rd)$, $p\ge 1$,
such that $\varphi_n\to \Psi$ weakly in $\Sobl^{2, d}(\Rd)$ and
strongly in $\Cc^{1, \alpha}_{\mathrm{loc}}(\Rd)$ for some $\alpha\in (0,1)$.
Therefore, we can pass to the limit in \cref{PT2.8A} to obtain
\begin{equation*}
\cG\Psi \,=\, \lambda\Psi\quad \text{in\ } \Rd, \quad \text{and\ } \Psi>0\,.
\end{equation*}
Using standard regularity theory from elliptic PDE
we assert that $\Psi\in\Cc^2(\Rd)$.
This completes the proof.
\end{proof}

We conclude this section with the proof of \cref{T2.19}.
\begin{proof}[Proof of \cref{T2.19}]
Let $\Hat{v}$ be a measurable selector from
the minimizer of $\cG\varphi-\lambda\varphi=0$.
Since $\lambda\in (\lamstr(\cG), \lamstr(\cH))$,
\cref{L3.1} asserts that $\varphi$ has the stochastic representation
in \cref{EL3.1B} with $v=\Hat{v}$. Indeed, if $c$ is bounded we have
\begin{align*}
\limsup_{\abs{x}\to \infty}\,\Bigl(\max_{u\in\Act}c(x, u)-\lambda\Bigr)
&\,\le\, \limsup_{\abs{x}\to \infty}\, \max_{u\in\Act}\,c(x, u) -\lamstr(\cG)\\
&\,\le\, \limsup_{\abs{x}\to \infty}\,\max_{u\in\Act}\,c(x, u)
+\norm{c^-}_\infty\,<\,\gamma\,.
\end{align*}
In turn, the proof of \cref{T2.5} shows that either $\varphi<0$ or $\varphi=0$.
But the first option implies that $\cH(-\varphi)-\lambda(-\varphi)=0$
which contradicts the definition
of $\lamstr(\cH)$ in \cref{E-lamstr}. Hence $\varphi=0$.
\end{proof}

\section*{Acknowledgements}
The research of Ari Arapostathis was supported
in part by the National Science Foundation through grant DMS-1715210,
in part by the Army Research Office through grant W911NF-17-1-001,
and in part by the Office of Naval Research through grant N00014-16-1-2956.
The research of Anup Biswas was supported in part by an INSPIRE faculty fellowship
and DST-SERB grant EMR/2016/004810.


\end{document}